\renewcommand{\trace}[1]{\mathrm{tr}\left(#1\right)}
\newcommand{\triu}[1]{\mathrm{triu}\left(#1\right)}
\newcommand{\ie}{\latin{i.e.}}
\newcommand{\eg}{\latin{e.g.}}
\newcommand{\tx}{\widetilde{x}}
\newcommand{\teps}{\widetilde{\varepsilon}}
\newcommand{\veps}{\varepsilon}
\newcommand{\tA}{\widetilde{A}}
\newcommand{\OFM}{OFM}
\newcommand{\TOM}{TriOFM}
\newcommand{\OFMOne}{OFM-(Obj1)}
\newcommand{\TOMOne}{TriOFM-(Obj1)}
\newcommand{\TOMTwo}{TriOFM-(Obj2)}
\begin{document}

\title{Triangularized Orthogonalization-free Method for Solving
Extreme Eigenvalue Problems}

% Order to be determined. Current is alphabetic ordering
\author{
    Weiguo Gao$^{\,\dagger\,\ddagger\,\mathsection}$,
    Yingzhou Li$^{\,\dagger}$,
    Bichen Lu$^{\,\mathsection\,\dagger}$
    \vspace{0.1in}\\
    $\dagger$ School of Mathematical Sciences, Fudan University\\
    $\ddagger$ School of Data Science, Fudan University\\
    $\mathsection$ Shanghai Center for Mathematical Sciences\\
    {\let\thefootnote\relax\footnote{{Authors are listed in alphabetical order.}}}
}

\maketitle

\begin{abstract}
A novel orthogonalization-free method together with two specific
algorithms are proposed to address extreme eigenvalue problems. On top of
gradient-based algorithms, the proposed algorithms modify the multicolumn
gradient such that earlier columns are decoupled from later ones. Locally,
both algorithms converge linearly with convergence rates depending on
eigengaps. Momentum acceleration, exact linesearch, and column locking are
incorporated to accelerate algorithms and reduce their computational
costs. We demonstrate the efficiency of both algorithms on random matrices
with different spectrum distributions and matrices from computational
chemistry.
\end{abstract}

{{\bf Keywords.} eigenvalue problem; orthogonalization-free; iterative
eigensolver; full configuration interaction; }

\reversemarginpar

\section{Introduction}
\label{sec:introduction}

This paper proposes a novel triangularized orthogonalization-free
method (\TOM{}) for solving extreme eigenvalue problems. Given a
symmetric matrix $A$, the extreme eigenvalue problem is defined as,
\begin{equation} \label{eq:EVP}
    AU = U \Lambda,
\end{equation}
where $A \in \bbR^{n \times n}$, $A^\top = A$, $\Lambda \in \bbR^{p \times
p}$ is a diagonal matrix with $A$'s $p$ smallest eigenvalues on the
diagonal in ascending order, and the columns of $U$ are the corresponding
eigenvectors. The proposed methods target some specific applications in
computational chemistry, in which areas smallest eigenpairs are desired as
the ground-state and low-lying excited-states. Though we introduce
algorithms for $p$ smallest eigenpairs, all algorithms in this paper can
be adapted to compute $p$ largest eigenpairs. Besides computational
chemistry, solving extreme eigenvalue problems is a fundamental
computational step in a wide range of applications, including but not
limited to the principal component analysis, dimension reduction, spectral
clustering, etc.

In this paper, we specifically concern extreme eigenvalue problems
with two properties:
\begin{enumerate}[(i)]
    \item Orthogonalization of the iteration variable $X$ is not permitted;
    \item Eigenvectors are sparse vectors.
\end{enumerate}
At least two important applications from computational chemistry,
linear-scaling density functional theory (DFT)~\cite{Mauri1993} and full
configuration interaction (FCI)~\cite{Knowles1984} for low-lying excited
states, admit these two properties. In linear-scaling DFT, the number of
desired eigenpairs is of the same order as the problem size. The
orthogonalization step then scales cubically, which is not permitted in
linear-scaling DFT. Regarding the sparsity, linear-scaling DFT adopts
localzed basis sets, and the eigenvectors therein are indeed
sparse~\cite{Brouder2007, Stubbs2020}. Although FCI also admits the above
two properties, it has its own unique feature. In FCI, the desired number
of eigenpairs $p$ is usually a small constant, e.g., $p=5,10$. While the
size of the matrix $n$ grows factorially as the system increases. For
example, considering a single water molecule with 48 spin-orbitals and 10
electrons, the matrix is of size $\sim 10^8$. Due to the factorially
increasing matrix size, orthogonalization is too expensive in both
computational and memory costs to be applied in practice. Sparsity is also
an important feature of FCI. Thanks to the two-body interaction feature of
the electrons, the matrix is extremely sparse. Regarding the water
molecule example, each column of the matrix has roughly $10^4$ nonzero
entries. The eigenvectors of the ground-state and low-lying excited-states
are sparse. FCI is the motivating application of this work, and hence some
of our algorithm designs would prefer FCI to DFT.

\subsection{Related Work}

For linear symmetric eigenvalue problems as \eqref{eq:EVP}, there are many
classical eigensolvers from textbooks of numerical linear algebra. Readers
are referred to~\cite{Golub2013} for references. In electronic structure
calculation, variants of classical eigensolvers, like
Davidson~\cite{Davidson1975}, locally optimal block preconditioned
conjugate gradient method (LOBPCG)~\cite{Knyazev2001}, projected
preconditioned conjugate gradient (PPCG)~\cite{Vecharynski2015}, Chebyshev
filtering~\cite{Banerjee2016, Zhou2006}, pole expansion~\cite{Li2017b,
PeterTang2014}, are widely used in the self-consistent field iteration in
DFT. All these methods are related to Krylov subspace. A recent software
ELSI~\cite{Yu2018, Yu2019} provides an interface to many of these
eigensolvers for DFT calculation.

Besides Krylov subspace methods, another family of methods view the
symmetric eigenvalue problem as a constrained optimization problem and
solve it using either first-order or second-order methods~\cite{Dai2019b,
Gao2018, Huang2015, Wen2013, Zhang2014}. These methods usually target more
general objective functions with orthonormal constraints. However, the
linear eigenvalue problem is always one of their important applications.
Since the feasible set of the orthonormality constraint is the Stiefel
manifold, these methods are also known as manifold optimization methods.
They take either the Euclidean gradient or Riemannian gradient step with
certain strategies in calculating the stepsize. A retraction or projection
step is needed to maintain the feasibility of the iteration variable.
Recently, in order to enhance the parallelizability, the retraction step
is avoided through either the augmented Lagrangian method~\cite{Gao2019,
Wen2016} or extend gradient~\cite{Dai2019a}.

Linear symmetric eigenvalue problems can also be written as an
unconstrained optimization problem. The most well-known one is
minimizing the Rayleigh quotient, which can be generalized to the
multicolumn case. Another two unconstrained optimization problems are
\begin{equation} \label{eq:obj1} \tag{Obj1}
    \min_{X\in \mathbb{R}^{n\times p}} \fnorm{A + XX^\top}^2,
\end{equation}
and
\begin{equation} \label{eq:obj2} \tag{Obj2}
    \min_{X\in \mathbb{R}^{n\times p}} \trace{(2I-X^\top X)X^\top AX},
\end{equation}
where $\fnorm{\cdot}$ denotes the Frobenius norm and $\trace{\cdot}$
denotes the trace operation. \eqref{eq:obj1} has been adopted
to address the extreme eigenvalue problems arsing from several
areas~\cite{Lei2016, Li2019c, Liu2015c}, including FCI~\cite{Li2020a,
Wang2019}. \eqref{eq:obj2} is widely known as the orbital minimization
method (OMM)~\cite{Corsetti2014, Lu2017a, Lu2017, Ordejon1993,
Mauri1993}, which is popular in the area of (linear-scaling) DFT. More
details about \eqref{eq:obj1} and \eqref{eq:obj2} are deferred to
Section~\ref{sec:preliminary}.

For all methods aforementioned in this section, some of them are
orthogonalization-free, and some of them converge to eigenvectors
directly.  Nevertheless, none of them is an orthogonalization-free method
converging to eigenvectors directly.

\subsection{Contribution}

In this paper, a novel iterative method named triangularized
orthogonalization-free method~(\TOM{}) is proposed, which is
orthogonalization-free and converges to eigenvectors directly. The method
is inspired by the unconstrained optimization methods (denoted as \OFM{}
throughout this paper). In \OFM{}, the updating direction is the gradient
of the objective function, whereas, in \TOM{}, the updating direction is a
triangularized version of the gradient, which decouples earlier columns
from later ones. When the gradient is triangularized in \TOM{}, the
updating direction is no longer a gradient of any function. Hence the
underlying dynamic is not a conservative flow. The analysis is then very
different from traditional analysis in optimization. In this paper, we
triangularize two objective functions, i.e., \eqref{eq:obj1} and
\eqref{eq:obj2}, and obtain two iterative algorithms named \TOMOne{} and
\TOMTwo{} respectively.

The convergence analysis of \TOMOne{} is carried out in detail. First,
we discuss the stable and unstable fixed points of our algorithm.
We then provide local convergence analysis with a convergence
rate. The rate is carried out through a careful analysis of the
accumulated error term. All analyses can be extended to \TOMTwo{},
and we state the corresponding theorems without detailed proof.
Global convergence can be also be established.  We leave the
detail in a companion paper~\cite{Gao2021}. Notice that the global
convergence is given without a rate.

After the analyses, we propose a few techniques to accelerate the
convergence and reduce the computational cost. Conjugate gradient
direction and linesearch strategies are proposed to accelerate both
algorithms. These two techniques were also applied in \OFM{} which are
tailored for \TOM{} in this paper. While, in \OFM{}, the locking technique
is not feasible due to the existence of the orthogonalization step. In
\TOM{}, the locking technique is incorporated to reduce the computational
cost.

Finally, numerical examples are provided to demonstrate the
effectiveness of \TOM{}. All suggested techniques are first explored on
random matrices and then applied to two practical examples, one from
DFT and another one from FCI. In both practical examples, we observe
that the proposed framework achieves both the orthogonalization-free
and converging to eigenvectors properties while not losing much
efficiency comparing with their original \OFM{} counterparts.

\subsection{Organization}

In the rest of this paper, Section~\ref{sec:preliminary} provides
detailed introductions to both \eqref{eq:obj1} and \eqref{eq:obj2}
with an analysis of the energy landscape. Section~\ref{sec:Triu-opt}
introduces \TOM{} and its two iterative algorithms, \TOMOne{} and
\TOMTwo{}, in detail. The convergence analysis is carried out in
Section~\ref{sec:localconv}. Algorithmic techniques are proposed in
Section~\ref{sec:implementation}. In Section~\ref{sec:numerical}, all
algorithms are numerically explored on random matrices and matrices
from practice.  Finally, Section~\ref{sec:conclusion} concludes the
paper with a discussion on future directions.

\section{Preliminary}
\label{sec:preliminary}

We introduce \OFM{} eigensolvers based on \eqref{eq:obj1} and
\eqref{eq:obj2} in this section. Notations used throughout the paper
are summarized in Table~\ref{tab:notations}, which would be used
without further explanation.

\begin{table}[ht]  
    \centering
    \begin{tabular}{lp{0.7\textwidth}}
        \toprule  
        Notation & Explanation \\ 
        \toprule
        $n$ & The size of the matrix.\\
        $q$ & The number of negative eigenvalues of the matrix.\\
        $p$ & The number of desired eigenpairs and $p \leq q$.\\
        \midrule
        $A$ & The $n$-by-$n$ symmetric matrix.\\
        $\Lambda$ & A diagonal matrix with diagonal entries being
        eigenvalues of $A$ in increasing ordering.\\
        $\lambda_i$ & The $i$-th smallest eigenvalue of $A$.\\
        $\Lambda_i$ & The first $i$-by-$i$ principal submatrix of
        $\Lambda$.\\
        $U$ & An orthogonal matrix satisfying $U^\top A U=\Lambda$.\\
        $u_i$ & The eigenvector of $A$ corresponding to $\lambda_i$.\\
        $U_i$ & The first $i$ columns of $U$.\\
        $\rho$ & The 2-norm of $A$, \ie, $\rho = \norm{A}_2$.\\
        \midrule
        $X^{(t)}$ & An $n$-by-$p$ matrix denoting the iteration variable at
        $t$-th iteration.\\
        $x^{(t)}_i$ & The $i$-th column of $X^{(t)}$.\\
        $X^{(t)}_i$ & The first $i$ columns of $X^{(t)}$.\\
        $f_1(X), f_2(X)$ & The objective function in \eqref{eq:obj1},
        \eqref{eq:obj2}.\\
        $\grad{f_1}(X), \grad{f_2}(X)$ & The gradient of
        $f_1(X)$, $f_2(X)$.\\
        \midrule
        $\alpha$ & The stepsize.\\
        $e_i$ & The $i$-th standard basis vector~\footnotemark.\\
        \bottomrule
    \end{tabular}
    \caption{Notations.}
    \label{tab:notations}
\end{table}
\footnotetext[1]{A vector of length $n$ with one on the $i$-th entry
and zero elsewhere.}

The orthogonalization step is a key step in most traditional eigensolvers,
\eg, power method, QR iteration, Lanczos, etc.  While, the
orthogonalization step is difficult to be efficiently parallelized on
modern computer architectures, \ie, distributed-memory computers and GPUs.
\OFM{} eigensolvers, in contrast, do not involve the orthogonalization
step and only require matrix-matrix multiplication, which is one of the
most parallel efficient operations. Hence \OFM{} is plausible for solving
large problems in a massively parallel environment.

Given that $A$ is symmetric, $A + X X^\top$ in \eqref{eq:obj1} is the
residual of a symmetric low-rank approximation. In \cite{Liu2015c},
\eqref{eq:obj1} is shown to be equivalent to a trace-penalty
minimization model with a specific penalty parameter.  In both
\cite{Li2019c, Liu2015c}, the energy landscape of \eqref{eq:obj1}
has been analyzed. \eqref{eq:obj1} does not have any spurious local
minimum, and all local minima are global minima. We rephrase and
summarize the analysis result as follows.

\begin{theorem} \label{thm:obj1}
    All {\bfseries stationary points} of \eqref{eq:obj1} are of form $X =
    U_q \sqrt{-\Lambda_q} S P$ and all {\bfseries local minima} are of form
    $X = U_p \sqrt{-\Lambda_p} Q$, $S \in \bbR^{q \times q}$ is a diagonal
    matrix with diagonal entries being 0 or 1 (at most $p$ 1s), $P \in
    \bbR^{q\times p}$ and $Q \in \bbR^{p\times p}$ are unitary matrices.
    Further, any local minimum is also a global minimum.
\end{theorem}

Notice that in Theorem~\ref{thm:obj1}, $A$ is implicitly assumed
to have at least $p$ negative eigenvalues. On the other hand, when
$p > q$, stationary points are exactly of the same form, whereas
local minima need to be updated as $X = U_q \sqrt{-\Lambda_q} Q$
with $Q \in \bbR^{q\times p}$ having orthogonal columns. For almost all
chemistry problems, the assumption $p \leq q$ holds in practice.
Hence we stick to this assumption for \eqref{eq:obj1} throughout the
paper to simplify our presentation.

The intuition behind \eqref{eq:obj2} is more complicated. There are two
ways to motivate the objective function: the approximated inverse and the
Lagrange multiplier.

A multi-column version of Rayleigh quotient admits $\trace{(X^\top
X)^{-1} X^\top A X}$, which could also be an objective function option
for \OFM{}. Assuming the spectrum of $X^\top X$ is bounded by one,
we have the Neumann series expansion of the inversion and the first
order approximation as,
\begin{equation*}
    (X^\top X)^{-1} = (I - (I - X^\top X) )^{-1} = \sum_{k=0}^{\infty}
    (I - X^\top X )^k \approx 2I - X^\top X.
\end{equation*}
Substituting the approximation into the multi-column Rayleigh quotient
leads to \eqref{eq:obj2}.

Another way to motivate \eqref{eq:obj2} is via the Lagrange multiplier
method. Lagrangian function for eigenvalue problem admits
\begin{equation*}
    \calL (X, \Xi ) = \trace{X^\top A X} - \trace{\Xi (X^\top X - I)},
\end{equation*}
where $\Xi$ denotes the Lagrange multiplier. The first order optimality
condition leads to an expression for the Lagrange multiplier, $\Xi =
X^\top A X$. Substituting this expression into the Lagrangian function
gives \eqref{eq:obj2}.

Previous work~\cite{Lu2017a} characterizes the energy landscape of
\eqref{eq:obj2}.  \eqref{eq:obj2} does not have any spurious local
minimum. The theorem therein is rephrased as follows.

\begin{theorem} \label{thm:obj2}
    Let $A$ be a symmetric negative semi-definite matrix. All {\bfseries
    stationary points} of \eqref{eq:obj2} are of form $X = U S P$ and all
    {\bfseries local minima} are of form $X = U_p Q$, where $S \in \bbR^{n
    \times n}$ is a diagonal matrix with diagonal entries being 0 or 1 (at
    most $p$ 1s), $P \in \bbR^{n\times p}$ and $Q \in \bbR^{p\times p}$ are
    unitary matrices. Further, any local minimum is also a global minimum.
\end{theorem}

Notice that the matrix $A$ in \eqref{eq:obj2} must be negative
semi-definite. Otherwise, $X$ can be scaled eigenvectors corresponding to
the positive eigenvalues, and \eqref{eq:obj2} is unbounded from below. For
eigenvalue problems, the matrix can be shifted to be negative
semi-definite. Comparing to \eqref{eq:obj1}, an extra step is needed to
estimate the shift, and shifting is needed every iteration.

Based on the analysis of the energy landscape of both \eqref{eq:obj1} and
\eqref{eq:obj2}, any algorithm avoiding saddle points converges to the
global minimum. Such algorithms include but are not limited to regular
gradient descent~\cite{Lee2019}, conjugate gradient descent, stochastic
gradient descent~\cite{Bottou2018, Li2019}, etc. Using the notation
defined in Table~\ref{tab:notations}, gradients of \eqref{eq:obj1} and
\eqref{eq:obj2} are
\begin{equation} \label{eq:grad-obj1}
    \grad{f_1}(X) = 4AX + 4XX^\top X,
\end{equation}
and
\begin{equation} \label{eq:grad-obj2}
    \grad{f_2}(X) = 4AX - 2XX^\top AX - 2AXX^\top X,
\end{equation}
respectively. The gradient descent iterations are defined as,
\begin{equation} \label{eq:iterobj1}
    X^{(t+1)} = X^{(t)} - \alpha \left( AX^{(t)} + X^{(t)} \left(
    X^{(t)} \right)^\top X^{(t)} \right),
\end{equation}
and
\begin{equation} \label{eq:iterobj2}
    X^{(t+1)} = X^{(t)} - \alpha \left( 2AX^{(t)} - AX^{(t)} \left(
    X^{(t)} \right)^\top X^{(t)} - X^{(t)} \left( X^{(t)} \right)^\top
    AX^{(t)} \right),
\end{equation}
where the constant is absorbed into the stepsize. Unfortunately, the
Hessian of both \eqref{eq:obj1} and \eqref{eq:obj2} are unbounded from
above. The valid set for the choice of the stepsize over the entire domain
is empty. For both \eqref{eq:obj1} and \eqref{eq:obj2}, one can find a
bounded domain such that iterations are guaranteed to stay within the
domain. Then Hessians are bounded over the domain and the valid set for
the stepsize is non-empty.

\section{Triangularized Optimization Eigensolvers}
\label{sec:Triu-opt}

We propose triangularized orthogonalization-free methods (\TOM{}) as
eigensolvers based on \eqref{eq:obj1} and \eqref{eq:obj2}, which are
denoted as \TOMOne{} and \TOMTwo{}.

Our goal, as mentioned in Section~\ref{sec:introduction} is to find $p$
extreme eigenpairs with two properties: (i). orthogonalization of $X$ is
not permitted; (ii). eigenvectors are sparse vectors. Optimizing
\eqref{eq:obj1} and \eqref{eq:obj2} almost achieves the first required
property except for the post-processing part, while the second property is
not taken into consideration. Due to the existence of the arbitrary
orthogonal matrix $Q$, the iterations \eqref{eq:iterobj1} and
\eqref{eq:iterobj2} converge to points with destroyed sparsity in the
original eigenvectors. Adding $\ell_1$ penalty to
\eqref{eq:obj2}~\cite{Lu2017} is proposed to achieve the sparsity as much
as possible in DFT problems, which is not likely to be applicable to FCI
problems.

Another way of explicitly getting the eigenpairs rather than a point in
the eigenspace is to solve the single-column version of \eqref{eq:obj1} or
\eqref{eq:obj2} recursively. For example, first, we solve the single
column version of either \eqref{eq:obj1} or \eqref{eq:obj2} for $A_1 = A$
and obtain the smallest eigenpair $\lambda_1$ and $u_1$. Then we apply the
method to $A_2 = A_1 - \lambda_1 u_1 u_1^\top$ and obtain $\lambda_2$ and
$u_2$. At $k$-th time, the method is applied to $A_k = A_{k-1} -
\lambda_{k-1} u_{k-1} u_{k-1}^\top = A - \sum_{i = 1}^{k-1} \lambda_i u_i
u_i^\top$ and $\lambda_k$ and $u_k$ are computed. Such a recursive
procedure has two drawbacks. First, single column operations are composed
of BLAS1-level and  BLAS2-level operations, which are not as efficient as
BLAS3-level operations in modern computer architecture. The second
drawback is the lack of efficient representation of the transformed matrix
$A_k$. The sparsity in $A$ plays a crucial role in designing algorithms
for FCI problems. While, $A_k$ is not as sparse as $A$ in almost all
cases.

Although the aforementioned recursive procedure is not ideal for our
problems, it inspires \TOMOne{} and \TOMTwo{}. We will first motivate and
derive \TOMOne{}. Then \TOMTwo{} can be derived in an analogy way.

In the above recursive procedure, the single column version of
\eqref{eq:iterobj1} is applied to $A_k = A - \sum_{i = 1}^{k-1} \lambda_i
u_i u_i^\top$. Notice that if the column-by-column procedure is applied,
the convergent point of $x_i$ is $\pm \sqrt{-\lambda_i}u_i$. Hence, $A_k$
can be viewed as the summation of $A$ with the outer product of convergent
vector of $x_1$, $x_2$, \dots, $x_{k-1}$. If we assume all columns update
together, and the single column version of \eqref{eq:iterobj1} is applied
to a closed approximation of $A_k$, \ie, $A_k \approx \tA_k = A + \sum_{i
= 1}^{k-1} x_i x_i^\top$, then we obtain the following iterative schemes,
\begin{equation}
    \begin{split}
        x_1^{(t+1)} = & x_1^{(t)} - \alpha \left( A x_1^{(t)} + x_1^{(t)}
        \left( x_1^{(t)} \right)^\top x_1^{(t)} \right), \\
        x_2^{(t+1)} = & x_2^{(t)} - \alpha \left( A x_2^{(t)} +
        x_1^{(t)} \left( x_1^{(t)} \right)^\top x_2^{(t)} + x_2^{(t)}
        \left( x_2^{(t)} \right)^\top x_2^{(t)} \right), \\
        \cdots & \\
        x_k^{(t+1)} = & x_k^{(t)} - \alpha \left( A x_k^{(t)} + \sum_{i
        = 1}^{k} x_i^{(t)}
        \left( x_i^{(t)} \right)^\top x_k^{(t)} \right), \\
        \cdots. & \\
    \end{split}
\end{equation}
Using matrix notations, the above iterative schemes admit the following
representation,
\begin{equation} \label{eq:triofm-obj1}
    X^{(t+1)} = X^{(t)} - \alpha \left( AX^{(t)} + X^{(t)} \triu{ \left(
    X^{(t)} \right)^\top X^{(t)}} \right),
\end{equation}
where $\triu{\cdot}$ denote the upper triangular part of a given matrix.
The key difference between \eqref{eq:iterobj1} and \eqref{eq:triofm-obj1}
is that the gradient is modified as,
\begin{equation} \label{eq:g1}
    g_1(X) = AX + X \triu{X^\top X}.
\end{equation}

Unfortunately, $g_1$ in~\eqref{eq:g1} is not a gradient of any energy
function. Hence, instead of analyzing the stationary points of the
energy function, we analyze the fixed points of \eqref{eq:triofm-obj1}
in Theorem~\ref{thm:stationarypt-obj1}.

\begin{theorem} \label{thm:stationarypt-obj1}
    All {\bfseries fixed points} of \eqref{eq:triofm-obj1} are of form $X =
    U_q \sqrt{-\Lambda_q} P S$, where $\sqrt{\cdot}$ is applied entry-wise,
    $P \in \bbR^{q \times p}$ is the first $p$ columns of an arbitrary
    $q$-by-$q$ permutation matrix, and $S \in \bbR^{p \times p}$ is a
    diagonal matrix with diagonal entries being $0$ or $\pm 1$. Within these
    points all {\bfseries stable fixed points} are of form $X = U_p
    \sqrt{-\Lambda_p} D$, where $D \in \bbR^{p \times p}$ is a diagonal
    matrix with diagonal entries being $\pm 1$. Others are {\bfseries
    unstable fixed points}.
\end{theorem}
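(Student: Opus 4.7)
The plan is to treat the two parts---characterization of the fixed points and stability analysis---separately, both by exploiting the fact that $g_1(X)_{:,k}$ depends only on $x_1,\dots,x_k$, so the iteration is coupled in a lower-triangular fashion. For the fixed-point set, writing the zero-gradient condition column by column gives, for each $k$,
\begin{equation*}
\tilde{A}_k\, x_k^{*} = -\|x_k^{*}\|^2\, x_k^{*}, \qquad \tilde{A}_k := A + \sum_{i<k} x_i^{*} (x_i^{*})^{\top}.
\end{equation*}
I will induct on $k$: assuming every earlier $x_i^{*}$ is either zero or of the form $d_i\sqrt{-\lambda_{j_i}}\,u_{j_i}$ for distinct indices $j_i\le q$, one checks $\tilde{A}_k = A - \sum_{i<k,\,x_i^{*}\neq 0}\lambda_{j_i}\, u_{j_i} u_{j_i}^{\top}$ shares the eigenvectors of $A$ with the used eigenvalues replaced by zero. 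Requiring $x_k^{*}$ to be an eigenvector of $\tilde{A}_k$ with nonpositive eigenvalue $-\|x_k^{*}\|^2$ then forces either $x_k^{*}=0$ or $x_k^{*}=d_k\sqrt{-\lambda_{j_k}}\,u_{j_k}$ for a fresh index $j_k\le q$; reassembling over $k$ produces exactly $X^{*}=U_q\sqrt{-\Lambda_q}\,P S D$.

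For stability, the same triangular coupling makes the Jacobian $J_{g_1}(X^{*})$ block upper triangular in the column-block ordering, so its spectrum is the union of the spectra of the diagonal blocks
\begin{equation*}
J_k \;=\; \tilde{A}_k + \|x_k^{*}\|^2\, I + 2\, x_k^{*} (x_k^{*})^{\top}.
\end{equation*}
Since each $J_k$ is simultaneously diagonalizable with $A$ in the $u_m$ basis, a direct substitution of $x_k^{*} = d_k\sqrt{-\lambda_{j_k}}\,u_{j_k}$ yields eigenvalues $-2\lambda_{j_k}$ at $m=j_k$, $-\lambda_{j_k}$ at each previously used index $m$, and $\lambda_m-\lambda_{j_k}$ for all remaining $m$; at a zero column, $J_k=\tilde{A}_k$ and the eigenvalues are just those of $\tilde{A}_k$.

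Reading off stability from these formulas is the payoff. For the candidate $X^{*}=U_p\sqrt{-\Lambda_p}\,D$ we have $j_k=k$ for each $k$ and no zero columns, so every eigenvalue of every $J_k$ is strictly positive---in particular $\lambda_m-\lambda_k>0$ for $m>k$ by the ascending ordering of eigenvalues---and hence the fixed point is asymptotically stable once $\alpha$ is small enough so that $I-\alpha J_{g_1}$ has spectral radius less than one. At any other fixed point one of two obstructions appears: either some column $x_k^{*}=0$, in which case $\tilde{A}_k$ still retains an unused negative eigenvalue $\lambda_m<0$ because at most $k-1<q$ of the $q$ negatives have been absorbed; or the permutation selects some $j_k>p$ while skipping an index $m\le p$, and then $\lambda_m-\lambda_{j_k}<0$ appears in $J_k$ because $m$ is neither $j_k$ nor a previously used index. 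In either case an unstable direction exists. The most delicate step will be the inductive fixed-point characterization, specifically handling degeneracies where $\tilde{A}_k$ has repeated eigenvalues at indices that have been used or zeroed; once that is resolved, the stability argument reduces to the sign bookkeeping above.
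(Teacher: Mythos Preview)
Your approach is essentially the same as the paper's: column-by-column induction for the fixed-point characterization, and a block upper-triangular Jacobian with the same diagonal blocks for stability. The paper's $J_{ii} = A + X_iX_i^\top + x_i^\top x_i I + x_ix_i^\top$ is exactly your $J_k = \tilde{A}_k + \|x_k^*\|^2 I + 2x_k^*(x_k^*)^\top$ once the $i$-th term is split off from $X_iX_i^\top$.

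There is, however, a genuine gap in your instability argument. Your dichotomy---``either some column is zero, or some $j_k>p$ while an index $m\le p$ is skipped''---does not cover the case where all $p$ columns are nonzero and the selected indices $\{j_1,\dots,j_p\}$ are exactly $\{1,\dots,p\}$ but in a non-identity order. For instance, with $p=2$ and $X^*=(\sqrt{-\lambda_2}\,u_2,\ \sqrt{-\lambda_1}\,u_1)$, no column vanishes and no $j_k$ exceeds $p$, yet this is not of the form $U_p\sqrt{-\Lambda_p}D$; and indeed $J_1$ has eigenvalue $\lambda_1-\lambda_2<0$ in the direction $u_1$, so the point is unstable.

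The repair is easy and is precisely what the paper does: take the \emph{first} index $s$ at which the column fails to be $\pm\sqrt{-\lambda_s}\,u_s$ (equivalently, $x_s^\top u_s=0$). By minimality, indices $1,\dots,s-1$ have already been used by $x_1^*,\dots,x_{s-1}^*$, so either $x_s^*=0$, in which case $J_s=\tilde{A}_s$ still has $\lambda_s<0$ in its spectrum, or $x_s^*=d_s\sqrt{-\lambda_{j_s}}\,u_{j_s}$ with $j_s>s$, in which case $m=s$ is unused and $\lambda_s-\lambda_{j_s}<0$ appears in $J_s$. The paper packages this as the single quadratic-form test $u_s^\top J_{ss}u_s<0$, which handles zero columns, out-of-range indices, and internal permutations uniformly.
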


\begin{proof}

All fixed points of \eqref{eq:triofm-obj1} satisfy $g_1(X) = 0$ with
$g_1(X)$ being a $n$-by-$p$ matrix. We prove the theorem by induction.
Here we introduce notations in addition to that in
Table~\ref{tab:notations}: $P_i \in \bbR^{q \times i}$ is the first $i$
columns of an arbitrary $q$-by-$q$ permutation matrix, $S_i \in \bbR^{i
\times i}$ is a diagonal matrix with diagonal entries being $0$ or $\pm
1$, and $D_i \in \bbR^{i\times i}$ is a diagonal matrix with diagonal
entries being $\pm 1$.

Consider the first column of $g_1(X) = 0$,
\begin{equation} \label{eq:equalityx1}
    Ax_1 + x_1 x_1^\top x_1 = 0,
\end{equation}
where $x_1^\top x_1$ is a non-negative scalar. When $x_1 = 0$,
\eqref{eq:equalityx1} naturally holds. When $x_1 \neq 0$, $x_1$ must be a
scalar multiple of an eigenvector of $A$ and $x_1^\top x_1$ is the
negative of the corresponding eigenvalue, which must be negative. Hence
$X_1 = x_1$ is of the form, $X_1 = U_q \sqrt{-\Lambda_q} P_1 S_1$.

Now assume the first $i$ columns of $X$ obeys $X_i = U_q
\sqrt{-\Lambda_q} P_i S_i$. Then the $(i+1)$-th column
of $g_1(X) = 0$ obeys
\begin{equation} \label{eq:equalityxi}
    0 = Ax_{i+1} + X_i X_i^\top x_{i+1} + x_{i+1} x_{i+1}^\top x_{i+1}
    = \tA x_{i+1} + x_{i+1} x_{i+1}^\top x_{i+1},
\end{equation}
where $\tA = A + X_i X_i^\top = A + U_q \sqrt{-\Lambda_q} P_i S_i^2
P_i^\top \sqrt{-\Lambda_q} U_q^\top$. $\tA$ is the original matrix $A$
zeroing out a few eigenvalues corresponding to the selected columns in
$P_i$ with $\pm 1$ in $S_i$. Applying the similar analysis as in the case
of \eqref{eq:equalityx1} to \eqref{eq:equalityxi}, we conclude that
$X_{i+1}$ is of the form, $X_{i+1} = U_q \sqrt{-\Lambda_q} P_{i+1}
S_{i+1}$.

Since $q \geq p$, we have a sufficient number of negative eigenpairs to be
added to $X$. The induction can be processed until $i = p$, and we obtain
the expression for all fixed points as in the theorem.

The stabilities of fixed points are determined by the spectrum of their
Jacobian matrices of $g_1$, \ie, $\Diff g_1(X)$. Since both $g_1(X)$ and
$X$ are matrices, the Jacobian is a 4-way tensor, which is unfolded as a
matrix here. In order to avoid over complicated index in subscripts, we
denote the matrix $g_1(X)$ as $G$. Notation $G_{ij}$ and $x_{ij}$ denote
the $(j,i)$-th element of $G$ and $X$ respectively. Then the Jacobian
matrix is written as a $p$-by-$p$ block matrix,
\begin{equation}
    \Diff g_1(X) = \Diff G = 
    \begin{pmatrix}
    J_{11} & \cdots & J_{1p}\\ 
    \vdots & \ddots & \vdots\\ 
    J_{p1} & \cdots & J_{pp}
    \end{pmatrix},
\end{equation}
with block $J_{ij}$ being,
\begin{equation}
    J_{ij} = 
    \begin{pmatrix}
        \parfrac{G_{i1}}{x_{j1}} & \cdots & \parfrac{G_{in}}{x_{j1}}
        \\
        \vdots & \ddots  & \vdots \\
        \parfrac{G_{i1}}{x_{jn}} & \cdots & \parfrac{G_{in}}{x_{jn}}
        \\
    \end{pmatrix}.
\end{equation}
Notice that the $i$-th column of $G$, $G_i = A x_i + X_i X_i^\top x_i$, is
independent of $x_{i+1}, \dots x_p$, which means $J_{ij} = 0$ for $i < j$.
$\Diff g_1(X)$ is a block upper triangular matrix. The spectrum of $\Diff
g_1(X)$ is determined by the spectrum of $J_{ii}$ for $i = 1, 2, \dots,
p$. Through a multivariable calculus, we obtain the explicit expression
for $J_{ii}$,
\begin{equation} \label{eq:Jacobiansubmat}
    J_{ii} = A + X_i X_i^\top + x_i^\top x_i I + x_i x_i^\top.
\end{equation}

We first show the stability of the fixed points of form $X = U_p
\sqrt{-\Lambda_p} D$. Substituting these points into
\eqref{eq:Jacobiansubmat}, we have,
\begin{equation}
    J_{ii} = A - U_i \Lambda_i U_i^\top - \lambda_i I - u_i
    \lambda_i u_i^\top.
\end{equation}
Since $\lambda_i$ is negative and strictly smaller than all eigenvalues of
$A - U_i \Lambda_i U_i^\top$, $J_{ii}$ is strictly positive definite for
all $i = 1, 2, \dots, p$. Therefore we have all eigenvalues of $\Diff
g_1(U_p \sqrt{-\Lambda_p} D)$ are strictly positive and $X = U_p
\sqrt{-\Lambda_p} D$ are stable fixed points.

Next, we show the instability of the rest fixed points.  If $X$ is a fixed
points but not of the form $U_p \sqrt{-\Lambda_p} D$, then there exist
indices $s$ such that $x_s^\top u_s = 0$. Denote $s$ as the first such
index. Substituting this point into $J_{ss}$ and computing the bilinear
form of $J_{ss}$ with respect to $u_s$, we have,
\begin{equation}
    u_s^\top J_{ss} u_s = \lambda_s - x_s^\top x_s < 0,
\end{equation}
where the inequality comes from the fact that $x_s$ is zero or corresponds
to eigenvalues greater than $\lambda_s$. Hence the Jacobian matrix has
negative eigenvalues. Hence these points are unstable fixed points.

\end{proof}

Algorithm~\ref{alg:triofm-obj1} is the pseudocode for
\eqref{eq:triofm-obj1}. The choice of the stepsize is unspecified, which
will be revealed in later sections.

\begin{algorithm}[ht]
    \caption{\TOM{}-\eqref{eq:obj1}/\TOM{}-\eqref{eq:obj2}}
    \begin{algorithmic}
        \State {\bf Input:} a symmetric matrix A,
            an initial point $X^{(0)}$
        \State $t=0$
        \While{not converged}
            \State \begin{empheq}[left={g^{(t)}=\empheqlbrace}]{align}
                & AX^{(t)} + X^{(t)} \triu{\left( X^{(t)}
                \right)^\top X^{(t)}}
                \label{alg:triofm-obj1} \tag{TriOFM-(Obj1)} \\
                & 2AX^{(t)} - AX^{(t)} \triu{
                \left(X^{(t)}\right)^\top X^{(t)}} - X^{(t)} \triu{
                \left(X^{(t)}\right)^\top A X^{(t)}}
                \label{alg:triofm-obj2} \tag{TriOFM-(Obj2)}
            \end{empheq}
            \State Choose a stepsize $\alpha^{(t)}$
            \State $X^{(t+1)} = X^{(t)} - \alpha^{(t)} g^{(t)}$
            \State $t = t + 1$
        \EndWhile
    \end{algorithmic}
\end{algorithm}

There is another way to understand the iterative scheme. The column with a
smaller index is decoupled from columns with larger indices. For example,
the iterative scheme of $x_1$ is independent of all later columns. For the
second column $x_2$, the iterative scheme on $x_2$ is the same as the
second column in the 2-column version of \eqref{eq:obj1}.  Recursively
applying the idea, we also reach Algorithm~\ref{alg:triofm-obj1}.

Similar idea can be applied to solve \eqref{eq:obj2} as well. We notice
that there are two terms in \eqref{eq:iterobj2} coupling columns together,
\ie, $AX^{(t)} \left( X^{(t)} \right)^\top X^{(t)}$ and $X^{(t)} \left(
X^{(t)} \right)^\top AX^{(t)}$. Using the decoupling idea, we can replace
the $\left( X^{(t)} \right)^\top X^{(t)}$ and $\left( X^{(t)} \right)^\top
AX^{(t)}$ by their upper triangular parts and result the following
iterative scheme,
\begin{equation} \label{eq:triofm-obj2}
    X^{(t+1)} = X^{(t)} - \alpha \left( 2AX^{(t)} - AX^{(t)} \triu{
    \left( X^{(t)} \right)^\top X^{(t)}} - X^{(t)} \triu{ \left(
    X^{(t)} \right)^\top AX^{(t)}} \right).
\end{equation}
Comparing to \eqref{eq:iterobj2}, the gradient is modified as,
\begin{equation} \label{eq:g2}
    g_2(X) = 2AX - AX \triu{X^\top X} - X \triu{X^\top AX}.
\end{equation}

The fixed points of \eqref{eq:triofm-obj2} can be analyzed in a similar
way. We summarize the properties in Theorem~\ref{thm:stationarypt-obj2}
and leave the proof in Appendix~\ref{app:stationarypt-obj2}.

\begin{theorem} \label{thm:stationarypt-obj2}
    Let $A$ be a negative definite matrix.  All {\bfseries fixed
    points} of \eqref{eq:triofm-obj2} are of form $X = U P S$ and
    all {\bfseries stable fixed points} are of form $X = U_p D$,
    where $P \in \bbR^{n \times p}$ is the first $p$ columns of an
    arbitrary $n$-by-$n$ permutation matrix, $S \in \bbR^{p \times
    p}$ is a diagonal matrix with diagonal entries being $0$ or $\pm 1$,
    and $D \in \bbR^{p\times p}$ is a diagonal matrix with diagonal
    entries being $\pm 1$.
\end{theorem}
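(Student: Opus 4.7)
The plan is to mirror the column-by-column induction used in the proof of Theorem~\ref{thm:stationarypt-obj1}, adapted to the richer nonlinearity $g_2(X) = 2AX - AX\triu{X^\top X} - X\triu{X^\top AX}$. The first column of $g_2(X) = 0$ reads $(2 - \|x_1\|^2)\, A x_1 = (x_1^\top A x_1)\, x_1$; when $x_1 \ne 0$, negative definiteness of $A$ gives $x_1^\top A x_1 < 0$, which precludes $\|x_1\|^2 = 2$ and forces $x_1$ to be a scalar multiple of some $u_k$, with $\|x_1\| = 1$ after substitution. Hence the first column has the form $0$ or $\pm u_k$, matching $X = UPSD$ restricted to its first column.

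For the inductive step, I would assume the first $i$ columns have the asserted form, write $V$ for the span of the used eigenvectors with orthogonal projector $P_V$, and use the two key identities $X_i X_i^\top = P_V$ and $AP_V = P_V A$ (the second follows because $V$ is an $A$-invariant subspace). These collapse the upper-triangular sums in the $(i+1)$-th column of $g_2(X)=0$ to
\[
2 A x_{i+1} - 2 A P_V x_{i+1} - \|x_{i+1}\|^2\, A x_{i+1} - (x_{i+1}^\top A x_{i+1})\, x_{i+1} = 0.
\]
Decomposing $x_{i+1} = y + z$ with $y \in V$ and $z \in V^\perp$, and writing $\alpha = \|x_{i+1}\|^2$, $\beta = x_{i+1}^\top A x_{i+1}$, this splits into $\alpha A y + \beta y = 0$ on $V$ and $(2 - \alpha) A z = \beta z$ on $V^\perp$. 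The hard step is ruling out $y \ne 0$: in that case the $V$-equation forces $y$ into an eigenspace of $A$ with eigenvalue $\lambda_{k_{j^*}}$, whence $\beta = -\lambda_{k_{j^*}}\alpha > 0$, contradicting $\beta < 0$ from negative definiteness of $A$. This is exactly where negative definiteness (stronger than the $q \geq p$ hypothesis for \eqref{eq:triofm-obj1}) is essential; once $y = 0$, the $V^\perp$-equation reduces to $(2-\|z\|^2)Az = (z^\top A z)z$, structurally the first-column equation on $V^\perp$, so $z \in \{0\}\cup\{\pm u_k : k \text{ unused}\}$.

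For the stability analysis, the $i$-th column of $g_2(X)$ depends only on $x_1,\ldots,x_i$, so $\Diff g_2(X)$ is block upper triangular and its spectrum equals the union of the diagonal blocks
\[
J_{ii} = (2-\|x_i\|^2) A - \sum_{j<i}\bigl(Ax_j x_j^\top + x_j x_j^\top A\bigr) - (x_i^\top A x_i) I - 2 (A x_i) x_i^\top - 2 x_i (A x_i)^\top,
\]
computed by multivariable calculus. At $X = U_p D$, diagonalizing $J_{ii}$ in the eigenbasis $\{u_j\}$ yields eigenvalues $-\lambda_j - \lambda_i$ on $u_j$ for $j<i$, $-4\lambda_i$ on $u_i$, and $\lambda_j - \lambda_i$ on $u_j$ for $j>i$, all strictly positive because the $\lambda_j$ are negative and in ascending order, so $U_p D$ is stable for the iteration with sufficiently small stepsize. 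For any other fixed point $X = UPSD$, let $s$ be the first index of deviation; then $x_j = \pm u_j$ for $j < s$, and testing $u_s^\top J_{ss} u_s$ gives $2\lambda_s < 0$ in the case $x_s = 0$ and $\lambda_s - \lambda_{k_s} < 0$ (since necessarily $k_s > s$) in the case $x_s = \pm u_{k_s}$. Hence $\Diff g_2(X)$ has a negative eigenvalue and the fixed point is unstable.
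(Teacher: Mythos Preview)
Your proposal is correct and follows essentially the same column-by-column induction as the paper's proof in Appendix~\ref{app:stationarypt-obj2}: the single-column analysis, the reduction of the $(i{+}1)$-th column to the single-column equation after establishing orthogonality to the used eigenvectors, and the block upper-triangular Jacobian argument all match. The only presentational differences are that you reach the orthogonality $y=0$ via the decomposition $x_{i+1}=y+z\in V\oplus V^\perp$ and an eigenvalue-sign contradiction (the paper instead left-multiplies the column equation by $X_i^\top$ and uses invertibility of the diagonal matrix $\alpha\Lambda+\beta I$ to obtain $S_iP_i^\top U^\top x_{i+1}=0$), and that you compute the eigenvalues of $J_{ii}$ at $U_pD$ explicitly where the paper argues positive definiteness abstractly.
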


Algorithm~\ref{alg:triofm-obj2} illustrates the pseudocode for
\eqref{eq:triofm-obj2} and the choice of the stepsize is also deferred to
later sections.

We claim a few advantages of Algorithm~\ref{alg:triofm-obj1} and
Algorithm~\ref{alg:triofm-obj2} over other related methods. First, both
algorithms converge to the eigenvectors or their scaled ones without
mixing them. Hence the sparsity of the eigenvectors is preserved. Although
we do not benefit from the sparsity during the iteration in
Algorithm~\ref{alg:triofm-obj1} and Algorithm~\ref{alg:triofm-obj2}
directly, we expect that the coordinate descent methods would benefit from
the sparsity and achieve fast convergence and small memory cost for FCI
problems. Second, the orthogonalization step is totally removed, which
makes the algorithm friendly to parallel computing. Third, all cubic
scaling operations can be processed through BLAS3-level routines.
Algorithms, therefore, benefit from the memory hierarchy of modern
computer architecture.

Although we only propose Algorithm~\ref{alg:triofm-obj1} and
Algorithm~\ref{alg:triofm-obj2} and analyze their convergence in this
paper, the idea of \TOM{} can be applied to a wide range of algorithms to
remove the redundancy introduced by the rotation invariance. The key point
here is decoupling each column from later columns while ensuring that the
iterative scheme for a column remains the same as solving the multicolumn
version of the objective function. The question of where and how \TOM{}
can be applied is open.

\section{Convergence Analysis}
\label{sec:localconv}

In this section, we focus on the local convergence of the proposed
\TOMOne{}. A similar result holds for \TOMTwo{} as well. We denote the set
of stable fixed points as $\calX^*$ and a stable fixed point as $X^* \in
\calX^*$. Further, $x^*_i$ denotes the $i$-th column of $X^*$ and $X_i^*$
denotes the first $i$ columns of $X^*$. The conclusion of the local
convergence to $X^*$ is given in Theorem~\ref{thm:local-conv-obj1},
whereas Lemma~\ref{lem:one-vec-linear} and
Lemma~\ref{lem:multi-vec-linear} provide per-iteration bound on the
residual of the first column and later columns, respectively. Finally, the
rate of local convergence is given in Corollary~\ref{cor:local-conv-obj1}.

\begin{lemma} \label{lem:one-vec-linear}
    Assume the stepsize $\alpha$ satisfies $\alpha < \min_{1 \leq j \leq
    p} \{\frac{1}{4\rho},\frac{1}{\lambda_{j+1}-\lambda_j}\}$. Let
    $\veps_1^{(t)}$ be the error of the first column after the $t$-th
    iteration, $\veps_1^{(t)} = x_1^{(t)} - x_1^*$. If
    $\norm{\veps_1^{(t)}} \leq \frac{\lambda_2 - \lambda_1} {8
    \sqrt{-\lambda_1}}$, then $\norm{\veps_1^{(t+1)}} \leq \left( 1 -
    \alpha \frac{\lambda_2-\lambda_1}{2} \right) \norm{\veps_1^{(t)}}$.
\end{lemma}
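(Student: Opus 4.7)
The plan is to linearize the decoupled first-column iteration around the fixed point $x_1^*$ and carefully control the nonlinear remainder. Because the update rule in \eqref{eq:triofm-obj1} restricted to the first column involves only $x_1^{(t)}$, we may treat it as a self-contained map on $\bbR^n$. Writing $x_1^{(t)} = x_1^* + \varepsilon_1^{(t)}$ with $x_1^* = \pm\sqrt{-\lambda_1}\,u_1$, and exploiting the identities $Ax_1^* = \lambda_1 x_1^*$ and $(x_1^*)^\top x_1^* = -\lambda_1$, a direct expansion of $Ax_1^{(t)} + x_1^{(t)}(x_1^{(t)})^\top x_1^{(t)}$ cancels the zeroth-order piece and gives
$$
\varepsilon_1^{(t+1)} = (I - \alpha M)\,\varepsilon_1^{(t)} - \alpha R\bigl(\varepsilon_1^{(t)}\bigr),
$$
where $M := A - \lambda_1 I + 2\,x_1^*(x_1^*)^\top$ collects the linear contribution and $R(\varepsilon) := \|\varepsilon\|^2 x_1^* + 2\bigl((x_1^*)^\top\varepsilon\bigr)\varepsilon + \|\varepsilon\|^2\varepsilon$ collects the quadratic and cubic contributions.

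Next I would diagonalize $M$ in the eigenbasis $\{u_j\}$ of $A$. Because $x_1^*(x_1^*)^\top = -\lambda_1\,u_1 u_1^\top$, $M$ has eigenvalue $-2\lambda_1 > 0$ on $u_1$ and eigenvalue $\lambda_j - \lambda_1 > 0$ on $u_j$ for $j \geq 2$. This gives the two-sided bound $\lambda_2 - \lambda_1 \leq \lambda_{\min}(M)$ (using $-2\lambda_1 \geq \lambda_2 - \lambda_1$, equivalent to $\lambda_2 \leq -\lambda_1$, which is automatic in the regime of at least two negative eigenvalues that is implicit here) and $\|M\| \leq \|A\| + |\lambda_1| + 2|\lambda_1| \leq 4\rho$. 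The hypothesis $\alpha \leq \tfrac{1}{4\rho}$ therefore ensures $I - \alpha M$ is positive semidefinite, and standard spectral estimates give
$$
\bigl\|(I - \alpha M)\,\varepsilon_1^{(t)}\bigr\| \leq \bigl(1 - \alpha(\lambda_2 - \lambda_1)\bigr)\bigl\|\varepsilon_1^{(t)}\bigr\|.
$$

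For the nonlinear remainder, the triangle inequality yields $\|R(\varepsilon)\| \leq 3\sqrt{-\lambda_1}\,\|\varepsilon\|^2 + \|\varepsilon\|^3$. Under the standing assumption $\|\varepsilon_1^{(t)}\| \leq \tfrac{\lambda_2-\lambda_1}{8\sqrt{-\lambda_1}} \leq \sqrt{-\lambda_1}$, the cubic term is dominated by the quadratic one, giving $\|R(\varepsilon_1^{(t)})\| \leq 4\sqrt{-\lambda_1}\,\|\varepsilon_1^{(t)}\|^2$. Invoking the size bound once more reduces this to $\alpha\|R(\varepsilon_1^{(t)})\| \leq \tfrac{\alpha(\lambda_2 - \lambda_1)}{2}\|\varepsilon_1^{(t)}\|$, and combining with the linear contraction factor produces the claimed rate $1 - \tfrac{\alpha(\lambda_2-\lambda_1)}{2}$.

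The main obstacle is bookkeeping the constants so that the threshold on $\|\varepsilon_1^{(t)}\|$ comes out to exactly $\tfrac{\lambda_2-\lambda_1}{8\sqrt{-\lambda_1}}$: the factor $8$ is precisely what is needed for the remainder bound $4\sqrt{-\lambda_1}\|\varepsilon\|^2$ to absorb half of the contraction budget $\alpha(\lambda_2-\lambda_1)\|\varepsilon\|$. A subtler point worth double-checking is the $u_1$ direction, whose native contraction factor is $1 - 2\alpha(-\lambda_1)$ rather than $1 - \alpha(\lambda_2-\lambda_1)$; as noted above, in the assumed regime this is only better, so the stated rate governs. Everything else is routine algebra comparing a quadratic perturbation to a linear one.
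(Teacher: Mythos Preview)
Your proposal is correct and follows essentially the same approach as the paper. The paper assumes without loss of generality that $A$ is diagonal and then writes out exactly the same linear/nonlinear decomposition you found, obtaining $\|I-\alpha M\|=1-\alpha(\lambda_2-\lambda_1)$ and the remainder bound $3\sqrt{-\lambda_1}\|\varepsilon\|^2+\|\varepsilon\|^3$, and closes with the same algebra using $\|\varepsilon\|\le\tfrac{\lambda_2-\lambda_1}{8\sqrt{-\lambda_1}}$; your explicit check that the $u_1$-direction contracts at least as fast (i.e., $-2\lambda_1\ge\lambda_2-\lambda_1$) is a point the paper leaves implicit.
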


\begin{proof}

Without loss of generality, we assume that $A$ is a diagonal matrix. For
simplicity, we drop the iteration index superscript and use $x_1 =
x_1^{(t)}$, $\veps_1 = \veps_1^{(t)}$, $\tx_1 = x_1^{(t+1)}$ and $\teps_1
= \veps_1^{(t+1)}$ instead.  Further we denote the first column of $X^*$
as $v_1 = x_1^*$. From Theorem~\ref{thm:stationarypt-obj1}, we have
$v_1^\top v_1 = -\lambda_1$ and $v_1 v_1^\top = -\lambda_1 u_1 u_1^\top =
-\lambda_1 e_1 e_1^\top$.

Based on the iterative scheme on the first column, \ie, $\tx_1 = x_1 -
\alpha Ax_1 - \alpha x_1^\top x_1 x_1$, we have,
\begin{equation}
    \begin{split}
        \teps_1 = \tx_1 - v_1 = &
        \veps_1 - \alpha A (v_1 + \veps_1) - \alpha (v_1 +
        \veps_1)^\top (v_1 + \veps_1) (v_1 + \veps_1) \\
        = &
        \left( (1 + \alpha \lambda_1) I - \alpha A - 2 \alpha
        v_1v_1^\top \right) \veps_1
        - \alpha v_1 \norm{\veps_1}^2 - 2 \alpha v_1^\top \veps_1
        \veps_1 - \alpha \norm{\veps_1}^2 \veps_1.
    \end{split}
\end{equation}
The assumption on $\alpha$ implies that $1 + \alpha \lambda_1 \pm \alpha
\lambda_i > 0$ holds for all $i$. Hence, the 2-norm of the diagonal matrix
$(1+\alpha \lambda_1) I - \alpha A - 2 \alpha v_1 v_1^\top $ admits
\begin{equation}
    \norm{(1+\alpha \lambda_1) I - \alpha A - 2 \alpha v_1
    v_1^\top } = 1 + \alpha \lambda_1 - \alpha \lambda_2.
\end{equation}
The norm of $\teps_1$ is bounded as,
\begin{equation}
        \norm{\teps_1} \leq ( 1 + \alpha \lambda_1 - \alpha
        \lambda_2 ) \norm{\veps_1} + 3 \alpha \sqrt{-\lambda_1}
        \norm{\veps_1}^2 + \alpha \norm{\veps_1}^3
        \leq \left(1-\alpha\frac{\lambda_2-\lambda_1}{2}\right)
        \norm{\veps_1},
\end{equation}
where the second inequality adopts the fact $\norm{\veps_1} \leq
\frac{\lambda_2-\lambda_1}{8\sqrt{-\lambda_1}}$.

\end{proof}

In Lemma~\ref{lem:one-vec-linear}, we prove that the error of $x_1$
converges linearly in a neighborhood of the stable fixed point. Now we
move on to the multicolumn case. For the $i$-th column $x_i$, we have the
following lemma.

\begin{lemma} \label{lem:multi-vec-linear}
    Assume the stepsize $\alpha$ satisfies $\alpha < \min_{1 \leq j \leq
    p} \{\frac{1}{4\rho},\frac{1}{\lambda_{j+1}-\lambda_j}\}$. Let
    $\veps_i^{(t)}$ be the error of the $i$-th column after the $t$-th
    iteration, $\veps_i^{(t)} = x_i^{(t)} - x_i^*$.  If
    $\norm{\veps^{(t)}_j}\leq
    \frac{\lambda_{j+1}-\lambda_j}{8\sqrt{-\lambda_j}}$ for all $j\leq i$,
    then we have $\norm{\veps^{(t+1)}_i}\leq \left(1-\alpha
    \frac{\lambda_{i+1}-\lambda_i}{2}
    \right)\norm{\veps^{(t)}_i}+\alpha\sum_{j=1}^{i-1}
    \frac{2\norm{A}^2}{\sqrt{\lambda_j \lambda_i}}\norm{\veps^{(t)}_j}$.
\end{lemma}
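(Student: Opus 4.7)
The plan is to mimic the proof of Lemma~\ref{lem:one-vec-linear} and add careful bookkeeping for the coupling between $\veps_i$ and the earlier errors $\veps_j$ for $j<i$. Without loss of generality I assume $A$ is diagonal, and set $v_j := x_j^*$ so that $v_j = \pm\sqrt{-\lambda_j}\,e_j$ and $\veps_j = x_j^{(t)} - v_j$. The iterative scheme for the $i$-th column reads
\begin{equation*}
    \tx_i = x_i - \alpha A x_i - \alpha\sum_{j=1}^{i} x_j x_j^\top x_i.
\end{equation*}
Substituting $x_j = v_j + \veps_j$, expanding each triple product $x_j x_j^\top x_i$, and exploiting the fixed-point identity $A v_i + \sum_{j\leq i} v_j v_j^\top v_i = 0$, I decompose $\teps_i := x_i^{(t+1)} - v_i$ into (a) a part purely linear in $\veps_i$, (b) a leading cross part in the earlier errors $\veps_j$ ($j<i$), and (c) a remainder of bilinear and trilinear terms in the $\veps$'s.

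For part (a), the $\veps_i$-linear contributions (from $A\veps_i$, from $v_j v_j^\top \veps_i$ for each $j\leq i$, and from $v_i\veps_i^\top v_i$ together with $\veps_i v_i^\top v_i$) collapse to $(I-\alpha M_i)\veps_i$ with $M_i = A + \sum_{j<i} v_j v_j^\top + 2 v_i v_i^\top - \lambda_i I$. In the eigenbasis $M_i$ is diagonal with strictly positive entries $-\lambda_i$ (coordinates $k<i$), $-2\lambda_i$ ($k=i$), and $\lambda_k-\lambda_i$ ($k>i$). Under the stepsize hypothesis every entry of $I-\alpha M_i$ lies in $[0,1]$, and the maximum, attained at $k=i+1$, equals $1-\alpha(\lambda_{i+1}-\lambda_i)$, which produces the main contraction.

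For part (b), the two candidate terms linear in $\veps_j$ are $v_j\veps_j^\top v_i$ and $\veps_j v_j^\top v_i$; the second vanishes for $j<i$ by the orthogonality $v_j^\top v_i = 0$, leaving $-\alpha\sum_{j<i} v_j\veps_j^\top v_i$ with norm at most $\alpha\sqrt{\lambda_j\lambda_i}\,\norm{\veps_j}$. Since $\norm{A}^2\geq|\lambda_j||\lambda_i|$, this is in turn bounded by $\alpha\norm{A}^2/\sqrt{\lambda_j\lambda_i}\cdot\norm{\veps_j}$. For part (c), the surviving cross remainders $v_j\veps_j^\top\veps_i$, $\veps_j v_j^\top\veps_i$, $\veps_j\veps_j^\top v_i$, $\veps_j\veps_j^\top\veps_i$ (for $j<i$) together with the pure higher-order-in-$\veps_i$ pieces $v_i\norm{\veps_i}^2$, $2(v_i^\top\veps_i)\veps_i$, and $\norm{\veps_i}^2\veps_i$ are controlled using the smallness hypotheses $\norm{\veps_j}\leq(\lambda_{j+1}-\lambda_j)/(8\sqrt{-\lambda_j})$. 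The pure higher-order-in-$\veps_i$ terms are absorbed into half of the linear contraction exactly as in the single-column proof, yielding the rate $1-\alpha(\lambda_{i+1}-\lambda_i)/2$; the bilinear and trilinear cross remainders are absorbed into the slack of the coefficient $2\norm{A}^2/\sqrt{\lambda_j\lambda_i}$, which is precisely why the lemma's coupling coefficient carries the factor $2$.

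The main obstacle is the book-keeping for these bilinear and trilinear cross terms: unlike the single-column case, each one mixes coordinates of $\veps_j$ and $\veps_i$ through inner products such as $\veps_j^\top v_i$ and $v_j^\top\veps_i$, so one must verify that none of them disturbs either the target contraction factor or the target coupling in $\veps_j$. The hypothesis that $\norm{\veps_j}$ is already smaller than the $j$-th eigengap divided by $8\sqrt{-\lambda_j}$ is the key tool: each cross remainder contains at least one extra factor of $\norm{\veps}$, which the eigengap bound converts into a small constant multiple of $\sqrt{\lambda_j\lambda_i}\,\norm{\veps_j}$, safely fitting inside $2\norm{A}^2/\sqrt{\lambda_j\lambda_i}\cdot\norm{\veps_j}$. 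Once every piece is bounded, the triangle inequality assembles them into the stated per-iteration estimate.
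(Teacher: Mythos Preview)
Your proposal is correct and follows essentially the same route as the paper: diagonalize $A$, expand $x_j = v_j + \veps_j$ in the iteration, separate the linear-in-$\veps_i$ operator (your $I-\alpha M_i$ is exactly the paper's prefactor $(1+\alpha\lambda_i)I-\alpha A-\alpha v_iv_i^\top-\alpha\sum_{j\le i}v_jv_j^\top$), bound its norm by $1-\alpha(\lambda_{i+1}-\lambda_i)$, isolate the single surviving linear cross term $v_j\veps_j^\top v_i$ with norm $\le\sqrt{\lambda_i\lambda_j}\|\veps_j\|$, and absorb the bilinear/trilinear remainders using the smallness hypothesis. The only difference is cosmetic: the paper carries out the remainder arithmetic explicitly (arriving at $\tfrac{116}{64}\|A\|^2/\sqrt{\lambda_i\lambda_j}\le 2\|A\|^2/\sqrt{\lambda_i\lambda_j}$), whereas you describe that absorption qualitatively.
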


\begin{proof}

Similarly, we drop the superscript in the proof. We denote the $i$-th
column of $X^*$ as $v_i = x_i^*$. From
Theorem~\ref{thm:stationarypt-obj1}, we have $v_i^\top v_i = -\lambda_i$
and $v_i v_i^\top = -\lambda_i u_i u_i^\top $ for $i = 1, \dots, p$.

Based on the iterative scheme,
$\tx_i = x_i - \alpha A x_i - \alpha \sum_{j=1}^{i}x_j x_j^\top
x_i$, there is
\begin{equation}\label{eq:iter-loc-multi}
    \begin{split}
        \teps_i = \tx_i - v_i = & \veps_i - \alpha A (v_i + \veps_i)
        - \alpha \sum_{j=1}^i (v_j v_j^\top  + \veps_j \veps_j^\top
        + v_j \veps_j^\top + \veps_j v_j^\top )(v_i + \veps_i) \\
        = & ((1 + \alpha \lambda_i)I - \alpha A - \alpha v_i
        v_i^\top - \alpha \sum_{j=1}^i v_j v_j^\top ) \veps_i -
        \alpha \sum_{j=1}^{i-1} v_j v_i^\top  \veps_j \\
        & - \alpha \sum_{j=1}^i \veps_j \veps_j^\top v_i - \alpha
        \sum_{j=1}^i (v_j \veps_j^\top + \veps_j v_j^\top ) \veps_i
        - \alpha \sum_{j=1}^i \veps_j \veps_j^\top \veps_i.
    \end{split}
\end{equation}
The norm of the prefactor of $\veps_i$ can be bounded as,
\begin{equation}
    \norm{\left(1+\alpha \lambda_i\right)I-\alpha A-\alpha v_i
    v_i^\top -\alpha \sum_{j=1}^i v_j v_j^\top } \leq 1+\alpha
    \lambda_i-\alpha \lambda_{i+1}.
\end{equation}

The norm of \eqref{eq:iter-loc-multi} is bounded as,
\begin{equation}
    \begin{split}
        \norm{\teps_i} \leq & \left(1 + \alpha \lambda_i -
        \alpha \lambda_{i+1}\right) \norm{\veps_i} + \alpha
        \sum_{j=1}^{i-1} \sqrt{\lambda_i \lambda_j} \norm{\veps_j}
        + \alpha \sqrt{-\lambda_i} \sum_{j=1}^i \norm{\veps_j}^2 \\
        & + 2 \alpha \sum_{j=1}^i \sqrt{-\lambda_j} \norm{\veps_j}
        \norm{\veps_i} + \alpha \sum_{j=1}^i \norm{\veps_j}^2
        \norm{\veps_i} \\
        = & \left(1 - \alpha \lambda_{i+1} + \alpha \lambda_i
        \right) \norm{\veps_i} + 3 \alpha \sqrt{-\lambda_i}
        \norm{\veps_i}^2 +\alpha \norm{\veps_i}^3 \\
        & +\alpha \sum_{j=1}^{i-1} \left[\sqrt{\lambda_i \lambda_j}
        \norm{\veps_j} + \sqrt{-\lambda_i} \norm{\veps_j}^2 +
        2 \sqrt{-\lambda_j} \norm{\veps_i} \norm{\veps_i} +
        \norm{\veps_j}^2 \norm{\veps_i} \right].
    \end{split}
\end{equation}
Denote $\Delta_j = \lambda_{j+1}-\lambda_j$ as the $j$-th eigengap. Using
the assumption $\norm{\veps_j}\leq \frac{\Delta_j}{8\sqrt{-\lambda_j}}$
for all $1 \leq j \leq i$, we have
\begin{equation}
    \norm{\teps_i} \leq \left(1 - \alpha \frac{\Delta_i}{2}\right)
    \norm{\veps_i} + \alpha \sum_{j=1}^{i-1} \left(
    \sqrt{\lambda_i \lambda_j} + \sqrt{-\lambda_i}
    \frac{\Delta_j}{8 \sqrt{-\lambda_j}} + \sqrt{\lambda_j}
    \frac{\Delta_i}{4 \sqrt{-\lambda_i}} + \frac{\Delta_i
    \Delta_j}{64 \sqrt{\lambda_i \lambda_j}}\right) \norm{\veps_j},
\end{equation}
where the first term $\left(1-\alpha \frac{\Delta_i}{2}\right)
\norm{\veps_i}$ is bounded in the same way as that in
Lemma~\ref{lem:one-vec-linear}. The second term can further be controlled
recursively,
\begin{equation}
    \begin{split}
        \norm{\teps_i} \leq & \left(1 - \alpha
        \frac{\Delta_i}{2}\right) \norm{\veps_i} + \alpha
        \sum_{j=1}^{i-1} \frac{64\lambda_i \lambda_j - 8 \lambda_i
        \Delta_j - 16 \lambda_j \Delta_i + \Delta_i \Delta_j}{64
        \sqrt{\lambda_i \lambda_j}} \norm{\veps_j} \\
        \leq & \left(1 - \alpha \frac{\Delta_i}{2}\right)
        \norm{\veps_i} + \alpha \sum_{j=1}^{i-1}
        \frac{116\norm{A}^2}{64\sqrt{\lambda_i \lambda_j}}
        \norm{\veps_j} \\
        \leq & \left(1 - \alpha \frac{\Delta_i}{2}\right)
        \norm{\veps_i} + \alpha \sum_{j=1}^{i-1}
        \frac{2\norm{A}^2}{\sqrt{\lambda_i \lambda_j}}
        \norm{\veps_j}.
    \end{split}
\end{equation}
Here $\norm{A}$ is adopted to simplify the final bound since all
$\lambda$s are controlled by $\norm{A}$.

\end{proof}

Lemma~\ref{lem:one-vec-linear} is a linear convergence result directly
whereas Lemma~\ref{lem:multi-vec-linear} is slightly different from the
standard linear convergence result. In Theorem~\ref{thm:local-conv-obj1}
we investigate the extra term $2\alpha \frac{\norm{A}^2}{\sqrt{\lambda_j
\lambda_i}} \norm{\veps_j}$ and find out that the overall local
convergence is linear.

\begin{theorem} \label{thm:local-conv-obj1}
    Assume the stepsize $\alpha$ satisfies $\alpha < \min_{j\in \left[1,p
    \right]}\{\frac{1}{4\rho},\frac{1}{\lambda_{j+1}-\lambda_j}\}$. Let
    $\veps_i^{(t)}$ be the error of the $i$-th column after the $t$-th
    iteration, $\veps_i^{(t)} = x_i^{(t)} - x_i^*$. If
    $\norm{\veps^{(0)}_j} \leq \frac{\lambda_{j+1} - \lambda_j}{8
    \sqrt{-\lambda_j}}$ for all $j \leq i$, then for any $i = 1, \dots, p$
    there exists a polynomial $C_i(t)$ of degree $i-1$ such that
    \begin{equation} \label{eq:local-rate}
        \norm{\veps_i^{(t)}}\leq C_i(t) r_i^{t}.
    \end{equation}
    where $r_i = 1 - \frac{\alpha}{2} \min_{j \in [1,i]} \{\lambda_{j+1} -
    \lambda_j\}$.
\end{theorem}

\begin{proof}

The theorem is proved by induction. First, Lemma~\ref{lem:one-vec-linear}
shows that $\norm{\veps^{(t)}_1}$ satisfies \eqref{eq:local-rate} for $C_1
= \norm{\veps^{\left(0 \right)}_1}$. Given $i \leq p$, we assume that the
theorem holds for all $j < i$. We further assume that all polynomials in
the theorem are non-decreasing. Denoting $a_j = \alpha \frac{2
\norm{A}^2}{\sqrt{\lambda_j \lambda_i}}$, the inequality in
Lemma~\ref{lem:multi-vec-linear} can be further bounded as,
\begin{equation}
    \norm{\veps_i^{(t)}} \leq r_i \norm{\veps_i^{(t-1)}}
    + \sum_{j=1}^{i-1} a_j \norm{\veps_j^{(t-1)}} \leq r_i
    \norm{\veps_i^{(t-1)}} + C_{max}(t) r_{i-1}^{t-1},
\end{equation}
where $C_{max}(t) = \sum_{j=1}^{i-1} a_j C_j(t)$ and the relationship
$r_1\leq \cdots \leq r_{i-1}$ is used so that all $r_j$s are bounded by
$r_{i-1}$. Notice that for each $j$, $a_j$ is positive and $C_j(t)$ is a
non-decreasing polynomial of degree $j-1$. $C_{max}(t)$ is then a
non-decreasing polynomial of degree $i-2$.

Since the inequality above holds for all $t \geq 1$, we apply it
repeatedly and obtain,
\begin{equation}
    \norm{\veps_i^{(t)}} \leq r_i^{t} \norm{\veps_i^{(0)}} +
    \sum_{k = 0}^{t-1} r_i^{t-1-k} C_{max}(k) r_{i-1}^{k} \leq
    \left(\norm{\veps_i^{(0)}} + \frac{t}{r_i} C_{max}(t) \right)
    r_i^{t} = C_i(t) r^t_i,
\end{equation}
where $C_i(t) = \norm{\veps_i^{(0)}} + \frac{t}{r_i}C_{max}(t)$ is a
non-decreasing polynomial of degree $i-1$. Hence the theorem is proved.

\end{proof}

Theorem~\ref{thm:stationarypt-obj1} states that there are a set of stable
fixed points of \ref{alg:triofm-obj1}. Next
Corollary~\ref{cor:local-conv-obj1} shows that the iterative scheme
\ref{alg:triofm-obj1} locally has linear convergence to the set of stable
fixed points. We define the distance from a point to a set as, $\fnorm{X -
\calX^*} = \min_{X^* \in \calX^*} \fnorm{X - X^*}$.

\begin{corollary} \label{cor:local-conv-obj1}
    Assume the stepsize $\alpha$ satisfies $\alpha < \min_{j\in \left[1,p
    \right]}\{\frac{1}{4\rho},\frac{1}{\lambda_{j+1}-\lambda_j}\}$. Let
    $\delta^{(t)}$ be the distance from the stable fixed points after the
    $t$-th iteration, $\delta^{(t)} = \fnorm{X^{(t)} - \calX^*}$. If
    $\delta^{(0)} \leq \min_{j \in [1, p]} \frac{\lambda_{j+1} -
    \lambda_j}{8 \sqrt{-\lambda_j}}$, then there exists a polynomial
    $C(t)$ of degree $p-1$ such that $\delta^{(t)} \leq C(t) r^{t}$, where
    $r = 1 - \frac{\alpha}{2} \min_{j \in [1,p]} \{\lambda_{j+1} -
    \lambda_j\}$.
\end{corollary}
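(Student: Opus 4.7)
The plan is to reduce this Frobenius-norm set-distance bound to the per-column estimates already proved in Theorem~\ref{thm:local-conv-obj1}. The principal obstacle is that $\delta^{(t)}$ measures distance to a set of $2^p$ isolated minima rather than to a single fixed point, so I must first pin down one specific minimum that the iteration stays near. By Theorem~\ref{thm:stationarypt-obj1}, $\calX^*$ consists of the finitely many points $U_p\sqrt{-\Lambda_p}D$ as $D$ ranges over $\pm 1$-diagonals; these are pairwise separated by at least $2\min_i \sqrt{-\lambda_i}$ in Frobenius norm, which is much larger than the eigengap scales $\frac{\lambda_{j+1}-\lambda_j}{8\sqrt{-\lambda_j}}$ bounding $\delta^{(0)}$. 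A unique nearest minimum $X^* \in \calX^*$ therefore exists with $\fnorm{X^{(0)} - X^*} = \delta^{(0)}$, and the contraction established below, together with the small-stepsize constraint $\alpha < 1/(4\rho)$, keeps the trajectory inside the basin of $X^*$ for all $t$, as indicated in the paragraph preceding the corollary.

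With $X^*$ fixed, I translate the set-distance hypothesis into per-column hypotheses. Because $\fnorm{\cdot}^2$ decomposes as a sum of squared column norms, each initial column error obeys
\begin{equation*}
\norm{\veps_i^{(0)}} \leq \fnorm{X^{(0)} - X^*} = \delta^{(0)} \leq \frac{\lambda_{i+1}-\lambda_i}{8\sqrt{-\lambda_i}},
\end{equation*}
which is exactly the hypothesis of Theorem~\ref{thm:local-conv-obj1} for every $i \leq p$. Applying that theorem column-wise produces non-decreasing polynomials $C_i(t)$ of degree $i-1$ satisfying $\norm{\veps_i^{(t)}} \leq C_i(t) r_i^t$, where $r_i = 1 - \frac{\alpha}{2}\min_{j\in[1,i]}\{\lambda_{j+1}-\lambda_j\}$. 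Since a minimum taken over $[1,i]$ is at least a minimum taken over $[1,p]$, we have $r_i \leq r$ and hence $r_i^t \leq r^t$ uniformly in $i$.

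The final step assembles these estimates via the Frobenius decomposition,
\begin{equation*}
    \left(\delta^{(t)}\right)^2 \leq \fnorm{X^{(t)} - X^*}^2 = \sum_{i=1}^p \norm{\veps_i^{(t)}}^2 \leq r^{2t} \sum_{i=1}^p C_i(t)^2.
\end{equation*}
Taking square roots and using the elementary bound $\sqrt{\sum_i C_i(t)^2} \leq \sum_i C_i(t)$, valid because each $C_i(t) \geq 0$, yields $\delta^{(t)} \leq C(t) r^t$ with $C(t) := \sum_{i=1}^p C_i(t)$ a non-decreasing polynomial whose degree equals $\max_i \deg C_i = p-1$. No genuinely new estimates beyond the per-column ones are needed; the only non-routine ingredient is the basin-isolation argument in the first paragraph, which is what permits the set-distance $\delta^{(t)}$ to be identified with distance to the single minimum $X^*$.
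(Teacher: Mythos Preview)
Your proposal is correct and follows essentially the same approach as the paper: isolate a single nearest minimum $X^*$ using the $2\sqrt{-\lambda_p}$ separation between stable fixed points, reduce the Frobenius set-distance to per-column errors via $\norm{\veps_i^{(0)}}\leq\delta^{(0)}$, invoke Theorem~\ref{thm:local-conv-obj1}, and recombine using $r_i\leq r$ and $\sqrt{\sum C_i^2}\leq\sum C_i$. Your write-up is in fact slightly more explicit than the paper's in justifying the basin-isolation step and the passage from the set-distance hypothesis to the per-column hypotheses.
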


\begin{proof}

We first notice that for any two distinct points in $\calX^*$, the
smallest distance in F-norm is $2\sqrt{-\lambda_p}$, which is greater than
twice initial error $\delta^{(0)} \leq \min_{j \in [1, p]}
\frac{\lambda_{j+1} - \lambda_j}{8 \sqrt{-\lambda_j}}$. Hence for any
initial point, it can only be attracted by one stable fixed point. By the
definition of $\delta^{(t)}$ and Theorem~\ref{thm:local-conv-obj1}, we
have,
\begin{equation}
    \delta^{(t)}=\sqrt{\sum_{i=1}^{p}\norm{\veps_i^{(t)}}^2}
    \leq \sqrt{\sum_{i=1}^{p}\left(C_i(t)r_i^t\right)^2}
    \leq \sum_{i=1}^p C_i(t) r_p^t =C(t)r^t
\end{equation}
where $C(t)=\sum_{i=1}^p C_i(t)$ is a polynomial of degree $p-1$ and the
second inequality is due to the non-negativity of $C_i(t)$.

\end{proof}

We shall notice that the estimation $\delta^{(t)} \leq C(t) r^{t}$
satisfies $\lim_{t \rightarrow \infty} \frac{C(t+1) r^{t+1}}{ C(t) r^t} =
r \lim_{t \to \infty} \frac{C(t+1)}{C(t)} = r$, which is the definition of
linear convergence. Hence we claim the iterative scheme
\ref{alg:triofm-obj1} locally converges linearly to stable fixed points. A
similar proof procedure can be applied to show the local linear
convergence for \ref{alg:triofm-obj2}.

\begin{remark}

We noticed that, for general eigensolvers, the convergence rate of each
eigenvector is shift-invariant cause it only relies on the spectrum gap.
However, things are different for the objective functions in this paper,
since the spectrum of \eqref{eq:iterobj1} and \eqref{eq:iterobj2} can be
regarded as the spectrum of $A$ combined with an extra $0$. So, if we
shift the spectrum of $A$ far away from $0$, the algorithms in our paper
would converge slower.

\end{remark}

In addition to local convergence, \ref{alg:triofm-obj1} and
\ref{alg:triofm-obj2} also converge globally. In our companion
paper~\cite{Gao2021}, the global convergence of \ref{alg:triofm-obj1} and
is proved in detail, which is rephrased in Theorem~\ref{thm:global-conv}.
Similar global and local convergence results of
\ref{alg:triofm-obj2}~\cite{Liu2021} have been proved in \cite{Liu2021},
following the idea in~\cite{Gao2021} and this paper. Both results rely on
the stable manifold theorem for discrete dynamical systems.

\begin{theorem} \label{thm:global-conv}
    If the initial point $X^{(0)} = \begin{pmatrix}x_1^{(0)} & x_2^{(0)} &
    \cdots & x_p^{(0)}\end{pmatrix}$ satisfies $\norm{x_i^{(0)}} \leq R_i$
    for all $1\leq i\leq p$, where $R_i=2^{i-1}\sqrt{3\rho}$ and the
    stepsize satisfies $\alpha \leq \frac{1}{10R_p^2}$, then the fixed
    stepsize version of Algorithm~\ref{alg:triofm-obj1} converges to
    $\calX^*$ for all initial points besides a set of measure zero.
\end{theorem}

Comparing the local and global convergence, as in
Corollary~\ref{cor:local-conv-obj1} and Theorem~\ref{thm:global-conv}, the
restrictions on stepsizes are different, \ie, the stepsize of global
convergence is much smaller. Such a small stepsize is needed in global
convergence to overcome the unbounded Lipschitz constant of the underlying
objective function but would be more flexible in practice, especially when
the stepsize is chosen in a sophisticated way, as stated in
Section~\ref{subsec:stepsize}.

\section{Implementation Details}
\label{sec:implementation}

In previous sections, we introduce \TOM{} algorithms based on the gradient
descent method with a constant stepsize and prove their convergence
properties. \TOM{} can be regarded as a modified gradient descent method.
In this section, we explore traditional accelerating techniques for
gradient methods and adapt them to \TOM{}. Such techniques include
momentum acceleration, stepsize choices, and column locking.

\subsection{Momentum Acceleration}

Momentum is a widely-used technique to accelerate gradient descent
methods. In traditional gradient descent methods, with the help of
momentum, the oscillatory trajectory could be smoothed, and the convergence
rate depends on the square root of the condition number rather than the
condition number.

Momentum method, instead of moving along the gradient direction directly,
moves along with an accumulation of gradient directions with a discounting
parameter $\beta \in (0, 1]$, \ie,
\begin{equation}\label{eq:momentum}
    V^{(t)} = \beta g \left( X^{(t)} \right) + (1 - \beta ) V^{(t-1)},
\end{equation}
where $V^{(t)}$ denotes the accumulated direction and $g$ is the gradient.
Then the iteration moves along $V^{(t)}$ with a stepsize
$\alpha$, \ie, $X^{(t+1)} = X^{(t)} - \alpha V^{(t)}$. Since $V$ is a
linear combination of gradient directions, an explicit way to generalize
it to the triangularized method is to replace the gradient $g$ by our
triangularized direction function either $g_1$ or $g_2$. Then we obtain
the momentum accelerated algorithms for \TOMOne{} and \TOMTwo{}.

Importantly, such a modification will not change the dependency among
columns of $X$. With this momentum enabled, the first $i$ columns remain
the same as the algorithm applied on $X = \begin{pmatrix} x_1 & x_2 &
\cdots & x_i \end{pmatrix}$. Any column of $X$ still depends only on
columns on its left throughout the iterations. However, for momentum
methods, choosing an efficient momentum parameter $\beta$ is an art.

Similarly, we can adopt the idea of conjugate
gradient~(CG)~\cite{Golub2013} to triangularized algorithms as well. CG is
a momentum method with adaptive momentum parameters and hence choosing
$\beta$ is avoided. CG is widely applied to solve both linear and
nonlinear problems. The success of nonlinear CG in solving eigenvalue
problems have already been demonstrated in OMM~\cite{Corsetti2014}. A
typical non-linear CG method is the Polak-Reeves CG
(PR-CG)~\cite{M2AN_1969__3_1_35_0}, which adopts the following steps per
iteration in a single-vector setting:
\begin{equation} \label{eq:cg-single}
    \begin{split}
        \beta^{(t)} = & \frac{\left(g(x^{(t)}) - g(x^{(t-1)}) \right)^\top
        g(x^{(t)})}{g(x^{(t-1)})^\top g(x^{(t-1)})}, \\
        v^{(t)} = & -g(x^{(t)}) + \beta^{(t)} v^{(t-1)}, \\
        x^{(t+1)} = & x^{(t)} + \alpha v^{(t)}. \\
    \end{split}
\end{equation}
In a multi-vector setting, \ie, the iteration variable is a matrix, the
formula for $\beta^{(t)}$ could be extended. However, the multi-vector
version for $\beta^{(t)}$ mixes all columns together and destroys the
column dependency of \TOM{}.

%In a multi-vector setting, \ie, the iteration variable is a matrix, the
%formula for $\beta^{(t)}$ must be modified. One natural way is to extend
%the single-vector inner product to the multi-vector inner product, \ie,
%\begin{equation} \label{eq:beta-CG}
%    \beta^{(t)} = \frac{\trace{g(X^{(t)}-g(X^{(t-1)})^\top g(X^{(t)})}}{
%    \trace{g(X^{(t-1)})^\top g(X^{(t-1)})}},
%\end{equation}
%where $X^{(t)}$ denotes the multi-vector iteration variable. However, this
%choice of $\beta^{(t)}$ is unfavorable in the triangularized algorithms.
%The $\beta^{(t)}$ as in \eqref{eq:beta-CG} mixes information of all
%columns of $X^{(t)}$. Hence the dependency of columns in triangularized
%algorithms will be changed if \eqref{eq:beta-CG} is used. Applying the
%algorithm to the first $i$ columns will not give you the identical
%iteration comparing to the first $i$ columns of applying the algorithm to
%all columns. Numerically, on practical problems, algorithms using
%\eqref{eq:beta-CG} show slower convergence than the following strategy in
%choosing $\beta$.

A more favorable choice of $\beta^{(t)}$ for \TOM{} is to use different
$\beta^{(t)}$s for different columns, which is called the columnwise CG
throughout this paper. The parameter for the $i$-th column, denoted as
$\beta_i^{(t)}$, is calculated as the single-vector setting with
$x_i^{(t)}$ and applied to update $x_i^{(t)}$. The corresponding algorithm
for \TOM{} is summarized as Algorithm~\ref{alg:triu-columnwisecg}.
In Algorithm~\ref{alg:triu-columnwisecg}, $g_i^{(t)}$ and $v_i^{(t)}$
denote the $i$-th column of $G^{(t)}$ and $V^{(t)}$ respectively.

\begin{algorithm}[ht]
    \caption{Columnwise CG for \TOM{}}
    \label{alg:triu-columnwisecg}
    \begin{algorithmic}
        \State {\bf Input:} symmetric matrix $A$, initial point
        $X^{(0)}$, stepsize $\alpha$
        \State $G^{(0)} = g(X^{(0)})$
        \State $V^{(0)} = - G^{(0)}$
        \State $X^{(1)} = X^{(0)} + \alpha V^{(0)}$
        \State $t = 1$
        \While{not converged}
            \State $G^{(t)} = g(X^{(t)})$
            \For{$i = 1, 2, \dots, p$}
                \State $\beta_i^{(t)} = \frac{(g_i^{(t)}-g_i^{(t-1)})^\top
                g_i^{(t)}}{(g_i^{(t-1)})^\top g_i^{(t-1)}}$
                \State $v_i^{(t)} = - g_i^{(t)} + \beta_i^{(t)} v_{i-1}^{(t)}$
            \EndFor
            \State $X^{(t+1)} = X^{(t)} + \alpha V^{(t)}$
            \State $t = t+1$
        \EndWhile
    \end{algorithmic}
\end{algorithm}

%Comparing the per-iteration computational cost,
%Algorithm~\ref{alg:triu-columnwisecg} remains the same as using
%\eqref{eq:beta-CG}. And the columnwise $\beta$ allows a column of
%$X^{(t)}$ to be decoupled from all later columns. Hence we preserve the
%column dependency in triangularized algorithms.

As a remark, there is another way in computing the parameter
$\beta_i^{(t)}$s, \ie, $\beta_i^{(t)}$ is calculated using the
multi-vector version of \eqref{eq:cg-single} with $X_i^{(t)}$. The
dependencies among columns are preserved. However, the calculation must be
conducted carefully to avoid increasing the computational cost.

\subsection{Stepsizes}
\label{subsec:stepsize}

In previous sections, we describe algorithms with a constant stepsize to
simplify the presentation. However, we find that a linesearch strategy
could significantly outperform the constant stepsize. In this section, we
introduce an exact linesearch strategy as the suggested stepsize strategy.

Since both \eqref{eq:obj1} and \eqref{eq:obj2} are quartic polynomials of
$X$, the exact linesearch can be calculated through minimizing quartic
polynomials. Minimizing a quartic polynomial with a positive leading
coefficient is equivalent to solve a cubic polynomial. Taking
\eqref{eq:obj1} as an example, the cubic polynomial is,
\begin{equation} \label{eq:full-linesearch}
    \begin{split}
        \frac{\diff }{\diff \alpha} f_1(X + \alpha V) = & \trace{V^\top
        \grad{f_1(X + \alpha V)}}\\
        = & \alpha^3 \trace{(V^\top V)^2} + 3 \alpha^2 \trace{V^\top V
        X^\top V} \\
        & + \alpha \trace{V^\top A V + (V^\top X)^2 + V^\top X X^\top V
        + V^\top V X^\top X} \\
        & + \trace{V^\top AX} + \trace{V^\top X X^\top X}.
    \end{split}
\end{equation}
Solving the expression above would give possibly one, two, or three real
roots. The best stepsize can be selected among real roots through a basic
analysis~\cite{Li2019c}. Similar calculation and analysis can also be
carried out for \eqref{eq:obj2}. We omit the details here.

However, the stepsize in \eqref{eq:full-linesearch} does not work for
\TOM{}. Consider a simple case for example. If $X$ is in the space spanned
by the smallest eigenpairs but not the stable fixed point, \ie, $X = U_p
\sqrt{-\Lambda_p} Q$ for $Q$ being a non-diagonal unitary matrix, then $X$
is already a global minimum of \eqref{eq:obj1} and the stepsize $\alpha$
is zero from solving \eqref{eq:full-linesearch}. This simple example shows
that the above linesearch strategy is not working properly for \TOM{} and
we need to find a different strategy for the stepsize.

Notice that the exact linesearch solves $\trace{V^\top \grad{f(X+\alpha
V)}} = 0$ for the stepsize $\alpha$. However, \TOM{} adopts $g_1$ or $g_2$
rather than $\grad{f_1}$ or $\grad{f_2}$, which means the iteration is not
consistency with the linesearch \eqref{eq:full-linesearch}. The columnwise
stepsize strategy is as follows. First, consider the stepsize for $x_1$.
We solve two identical equations, $v_1^\top g(x_1 + \alpha v_1) = 0$ and
$v_1^\top \grad{f}(x_1 + \alpha v_1) = 0$, to obtain the stepsize. Now we
consider the stepsize $\alpha_i$ for $x_i$. We can solve either
$\trace{V_i^\top \grad{f(X_i + \alpha_i V_i)}} = 0$ or $\trace{V_i^\top
g(X_i + \alpha_i V_i)} = 0$ for $\alpha_i$.  The former is the same as
\eqref{eq:full-linesearch} with $X$ and $V$ replaced by $X_i$ and $V_i$
respectively. The later can be expressed as again a cubic polynomial of
$\alpha_i$,
\begin{equation} \label{eq:linesearch-poly}
    \begin{split}
        p(\alpha_i) = & \alpha_i^3 \trace{V_i^\top V_i \triu{V_i^\top
        V_i}} \\
        & + \alpha_i^2 \trace{V_i^\top V_i \triu{X_i^\top V_i} +
        V_i^\top V_i \triu{V_i^\top X_i} + V_i^\top X_i \triu{V_i^\top
        V_i}} \\
        & + \alpha_i \trace{V_i^\top A V_i + V_i^\top X_i \triu{V_i^\top
        X_i} + V_i^\top X_i \triu{X_i^\top V_i} + V_i^\top V_i
        \triu{X_i^\top X_i}} \\
        & + \trace{V_i^\top A X_i + V_i^\top X_i \triu{X_i^\top X_i}}.
    \end{split}
\end{equation}
Using either equation, we are able to avoid $\alpha_i = 0$ if $X_i$ stays
in the space spanned by eigenvectors while $X_i$ is not any stable fixed
point. The local convergences for both choices of stepsize can be proved
in a similar way as in Section~\ref{sec:localconv}. Regarding the
computational cost, since all trace terms can be computed in an
accumulative way, the computational cost for getting coefficients in
\eqref{eq:linesearch-poly} and \eqref{eq:full-linesearch} remains the same
for all $i$.

According to our numerical experiments, the columnwise stepsize strategy
based on the linesearch significantly outperforms the fixed stepsize,
while there is not much difference between solving $\trace{V_i^\top g(X_i
+ \alpha V_i)} = 0$ and $\trace{V_i^\top \grad{f(X_i + \alpha V_i)}} = 0$.
Throughout the rest paper, we solve $\trace{V_i^\top g} = 0$ for stepsize.

\subsection{Column Locking}

In Section~\ref{sec:localconv} we notice that each column has its own
convergence rate, and later columns converge slower than earlier ones in
terms of the analysis. A similar conclusion is observed numerically. It
wastes computation resources if all columns are updated throughout
iterations. Hence in addition to the overall stopping criterion of \TOM{}
methods, $\fnorm{g(X^{(t)})} < \epsilon$, we introduce a column locking
technique to allow early stopping for converged columns.

The column locking has been widely adopted in many traditional
eigensolvers. However, in orthogonalization-free
eigensolvers~\cite{Corsetti2014, Li2019c, Wang2019}, the locking technique
is not applicable since all columns are coupled together throughout
iterations. \TOM{}, differently, can adopt the column locking in a
specific ordering. Since the earlier columns in \TOM{} are independent of
later columns, as long as they have converged, we could lock these
columns.

The column locking strategy depends on the error propagation among
columns. Lemma~\ref{lem:multi-vec-linear} hints the error propagation.
However, we find that the error estimation in
Lemma~\ref{lem:multi-vec-linear} is pessimistic. Here we give an intuitive
but helpful discussion on the error propagation, where higher order terms
in the error vector are ignored. Let $\veps_i^{(t)} = \left(
\veps_{i,1}^{(t)}, \cdots, \veps_{i,n}^{(t)} \right)^\top$ be the error of
$x_i^{(t)}$ projected to eigenvectors of $A$, \ie, $\veps_i^{(t)} = U^\top
\left(x_i^{(t)}-x_i^* \right)$. The projected error $\veps_i^{(t)}$ here
is consistent with the notations in Section~\ref{sec:localconv}, where $A$
is assumed to be diagonal. The error in $i$-th column of
\eqref{alg:triofm-obj1}, without higher order terms, admits,
\begin{equation}\label{eq:epsij}
    \veps_i^{(t+1)}=
    \begin{pmatrix}
        (1 + \alpha \lambda_i)\veps_{i,1}^{(t)} - \alpha
        \sqrt{\lambda_1 \lambda_i} \veps_{1,i}^{(t)}\\ 
        \vdots\\ 
        (1 + \alpha \lambda_i)\veps_{i,i-1}^{(t)} - \alpha
        \sqrt{\lambda_{i-1} \lambda_i} \veps_{i-1,i}^{(t)}\\ 
        (1 + 2\alpha \lambda_i) \veps_{i,i}^{(t)}\\ 
        (1 + \alpha (\lambda_{i+1} - \lambda_{i}))
        \veps_{i,i+1}^{(t)}\\ 
        \vdots\\ 
        (1 + \alpha (\lambda_{n} - \lambda_{i}))
        \veps_{i,n}^{(t)}
    \end{pmatrix}.
\end{equation}
The equation \eqref{eq:epsij} implies that the lower triangular part of
$\left(\veps_{1}^{(t)}, \cdots, \veps_{p}^{(t)}\right)$ does not depend on
other error vectors and thus is able to converge to zero as $t$ goes to
infinity even if other columns are locked. For the strict upper triangular
part, we consider the case where columns earlier than $i$ are locked with
fixed errors. Taking the $j$-th row ($j<i$) for example, when
$\veps_{j,i}^{(t)}$ is fixed, $\veps_{i,j}^{(t+1)} = (1 + \alpha
\lambda_i) \veps_{i,j}^{(t)} - \alpha \sqrt{\lambda_j \lambda_i}
\veps_{j,i}^{(t)}$ has the fixed point $\veps_{i,j} =
\sqrt{\frac{\lambda_j}{\lambda_i}}\veps_{j,i}$ as $t$ goes to infinity.
Notice that each entry in the upper triangular part is only influenced by
an error term in the lower triangular part. Through a detailed derivation
by induction, we have an estimation on the norms of error vectors for
\TOMOne{} as $t \to \infty$,
\begin{equation}
    \norm{\veps_1} \sim \epsilon, \quad \norm{\veps_2} \sim
    \sqrt{\frac{\lambda_1}{\lambda_2}}\epsilon, \quad \cdots,
    \quad \norm{\veps_p} \sim \sqrt{\frac{\lambda_1}{\lambda_p}} \epsilon.
\end{equation}
The estimation above shows that there is a uniform upper bound on
$\sqrt{-\lambda_i}\norm{\veps_i}$ for all $1\leq i\leq p$.

Further analysis on $g_1(X^{(t)})$ in the stopping criterion shows that
the norms of columns of $g_1(X^{(t)})$ admit the same scaling as that of
$\norm{\veps_1}, \dots, \norm{\veps_p}$. Hence we could include an
additional term with scaling $\sqrt{-\lambda_i}$ for the $i$-th column. A
good choice is $\norm{Ax_i}^{\frac{1}{3}}$. The recommended locking
criterion for \TOMOne{} is
\begin{equation}
    \norm{g_1(x_i^{(t)})}\norm{Ax_i^{(t)}}^{\frac{1}{3}} < \epsilon.
\end{equation}

An analog estimation can be carried out for \TOMTwo{} as well. The unified
locking criterion for \TOMTwo{} is 
\begin{equation}
    \norm{g_2(x_i^{(t)})}\norm{Ax_i} < \epsilon.
\end{equation}

\section{Numerical Results}
\label{sec:numerical}

In this section, we show the efficiency of \TOM{} applying to three
different groups of matrices, \ie, random matrices with different
eigenvalue distributions, a synthetic matrix from DFT, and a matrix of
Hubbard model under FCI framework.

In Section~\ref{sec:rand_matrix}, we first show that \TOM{} with a
constant stepsize locally has linear convergence rate on random matrices
with different eigenvalue distributions, which agrees with our analysis in
Section~\ref{sec:localconv}. Further, accelerating techniques introduced
in Section~\ref{sec:implementation} are adopted and compared. Then we
apply \TOM{} with these techniques to two matrices from DFT and FCI in
Section~\ref{sec:numres_dft} and Section~\ref{sec:numres_fci}
respectively. In both examples, \TOM{} converges to sparse eigenvectors,
whereas traditional orthogonalization-free methods fail to recover the
sparsity. Regarding the computational cost, \TOM{} is, in general,
comparable to its non-triangularized counterpart.

For a fair comparison reason, we adopt the same stopping criterion for
both \TOM{} and \OFM{}: the relative residual is smaller than a tolerance
$\epsilon$, \ie, $\frac{\fnorm{AXQ-XQ\Lambda_X}}{\fnorm{AXQ}} < \epsilon$,
where $Q$ and the diagonal matrix $\Lambda_X$ come from solving a
generalized eigenvalue problem, $\left(X^\top AX\right)Q=\left(X^\top
X\right)Q\Lambda_X$. Such a stopping criterion is not applicable in
practice. For the illurstration purpose, it is adopted in this section for
a fair comparison. If the column locking is enabled in \TOM{}, the
algorithm could stop if all columns are locked. Two measurements of
accuracies are used. The first one measures the accuracy of eigenvectors,
\begin{equation} \label{eq:evec}
    e_{vec} = \min_{ X^* \in
    \calX^*} \frac{\fnorm{X - X^*}}{\fnorm{X^*}},
\end{equation}
where $\calX^*$ denotes the set of all possible stable fixed points of the
used algorithm. The second measures the accuracy of eigenvalues,
\begin{equation} \label{eq:eval}
    e_{val} = \frac{\abs{\trace{\left(X^\top X\right)^{-1} X^\top A X}
    - \sum_{i=1}^{p} \lambda_i}}{\abs{\sum_{i=1}^{p} \lambda_i}}.
\end{equation}

We also define two measurements for computational costs. Since all of our
codes are implemented in MATLAB, which favors matrix operations over
vector operations, the runtime comparison is not fair. Hence we introduce
\emph{number of iterations} and \emph{number of matrix-vector
multiplications}. Without column locking, the number of matrix-vector
multiplications is simply the number of iterations multiplying the number
of columns in $X^{(t)}$. When column locking is enabled, it is the
summation of the number of unlocked columns throughout iterations.

\subsection{Random Matrices}
\label{sec:rand_matrix}

In this section we apply different \TOM{} algorithms to random matrices
and compare the performance against their \OFM{} counterparts. We generate
random matrices of size $n = 500$. The number of desired eigenpairs is $p
= 5$ and $p=10$ in Section~\ref{sec:numres_localconv} and
Section~\ref{sec:numres_techniques} respectively. Random matrices are
generated of the form
\begin{equation}
    A = U^\top \Lambda U,
\end{equation}
where $U$ is a random orthogonal matrix generated by a QR factorization of
a random matrix with entries sampled from a standard normal distribution
independently. Here $\Lambda$ denotes a diagonal matrix with its elements
$\{\lambda_i\}_{i=1}^n$ generated from three different ways,
\begin{enumerate}
    \item (Uniform) $\lambda_i = \frac{i-1}{500} - 1$ for $1\leq
        i\leq n$;
    \item (Logarithm) $\lambda_i = - \frac{2^{10}}{500} \frac{1}{2^i}$
        for $1\leq i\leq n$;
    \item (U-Shape) $\lambda_1 = -\frac{14}{16}, \lambda_2 =
        -\frac{10}{16}, \lambda_3 = -\frac{8}{16}, \lambda_4 =
        -\frac{7}{16}, \lambda_5 = -\frac{5}{16}, \lambda_i =
        -\frac{1}{16}$ for all $6\leq i\leq n$.
\end{enumerate}
In the U-shape case, the first 5 eigengaps are $\frac{4}{16},
\frac{2}{16}, \frac{1}{16}, \frac{2}{16}, \frac{4}{16}$, which decays
exponentially first and then grows exponentially. We denote these three
random matrices as $A_{uni}$, $A_{log}$, and $A_{ushape}$. The eigengaps
of $A_{uni}$ and $A_{log}$ are two typical cases for many applications.
While, $A_{ushape}$ is constructed to reveal the difference between \TOM{}
and \OFM{}.

\subsubsection{Local Convergence Rate}
\label{sec:numres_localconv}

We first numerically validate the convergence rate proved in
Section~\ref{sec:localconv}. The stepsize is fixed, $\alpha = 0.4$.
Initial state $X^{(0)} \in \bbR^{n\times p}$ is a random matrix with unit
column lengths. Column locking technique is applied, whereas momentum
techniques are disabled.

\begin{figure}[htp]
    \includegraphics[width=0.333\textwidth]{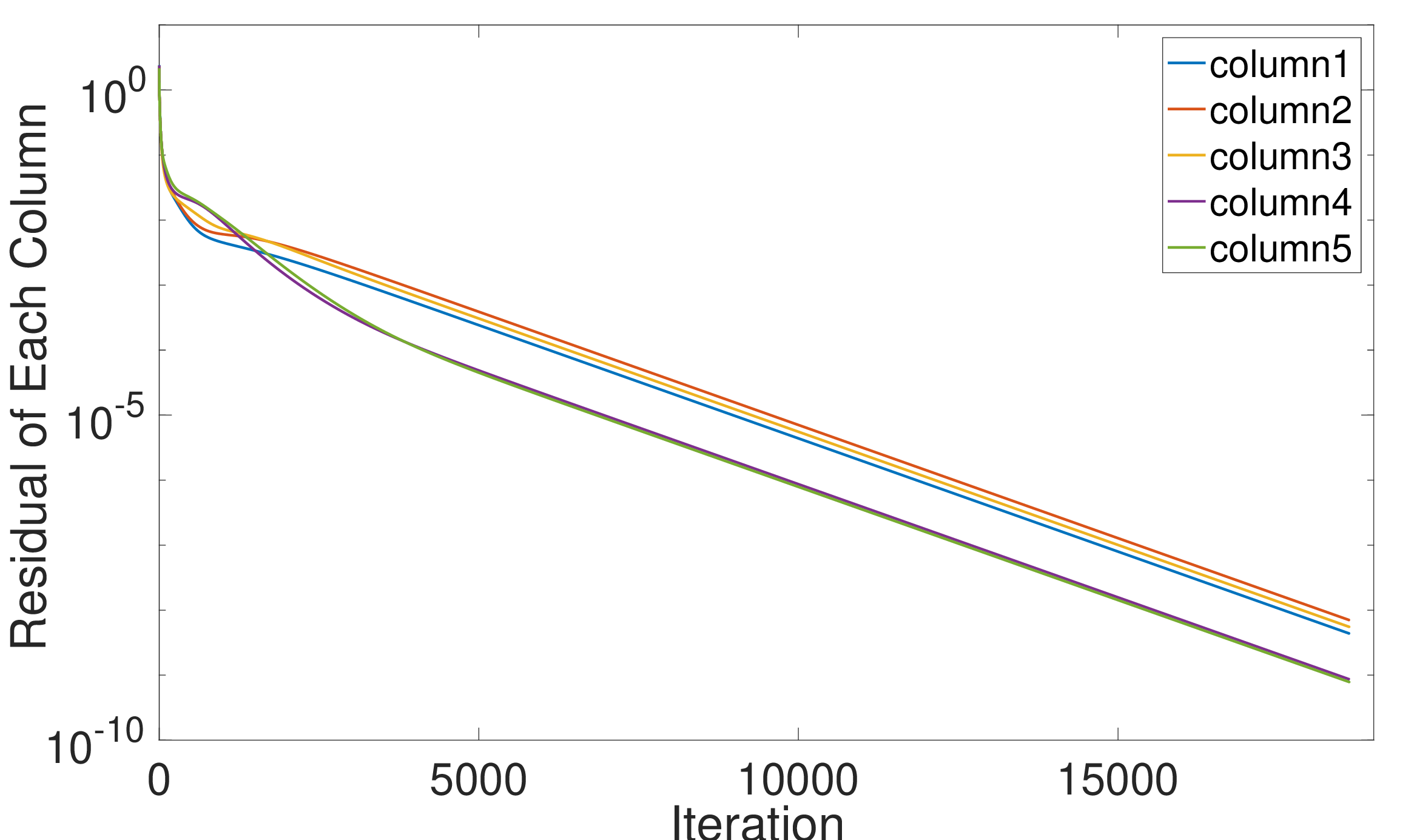}
    \includegraphics[width=0.333\textwidth]{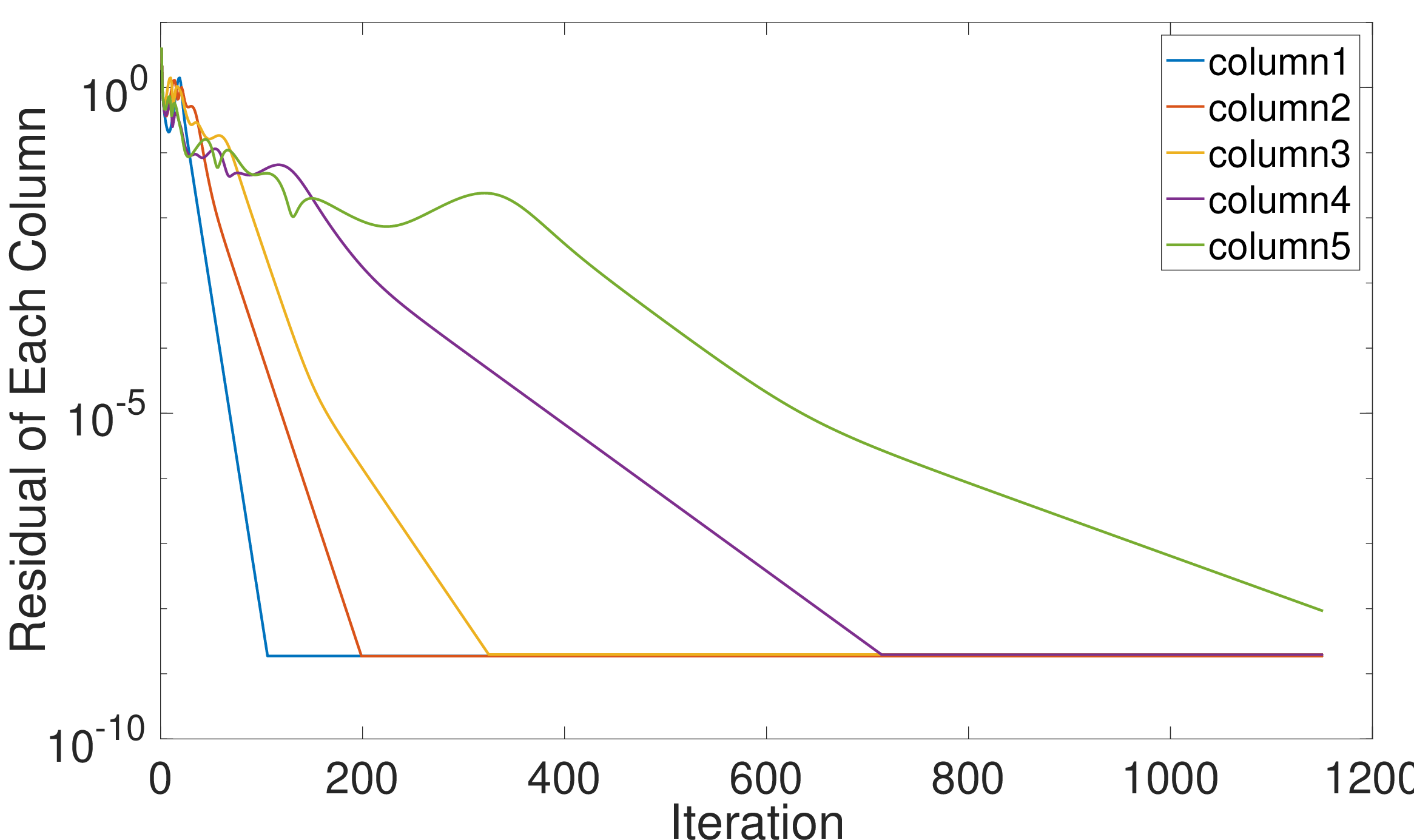}
    \includegraphics[width=0.333\textwidth]{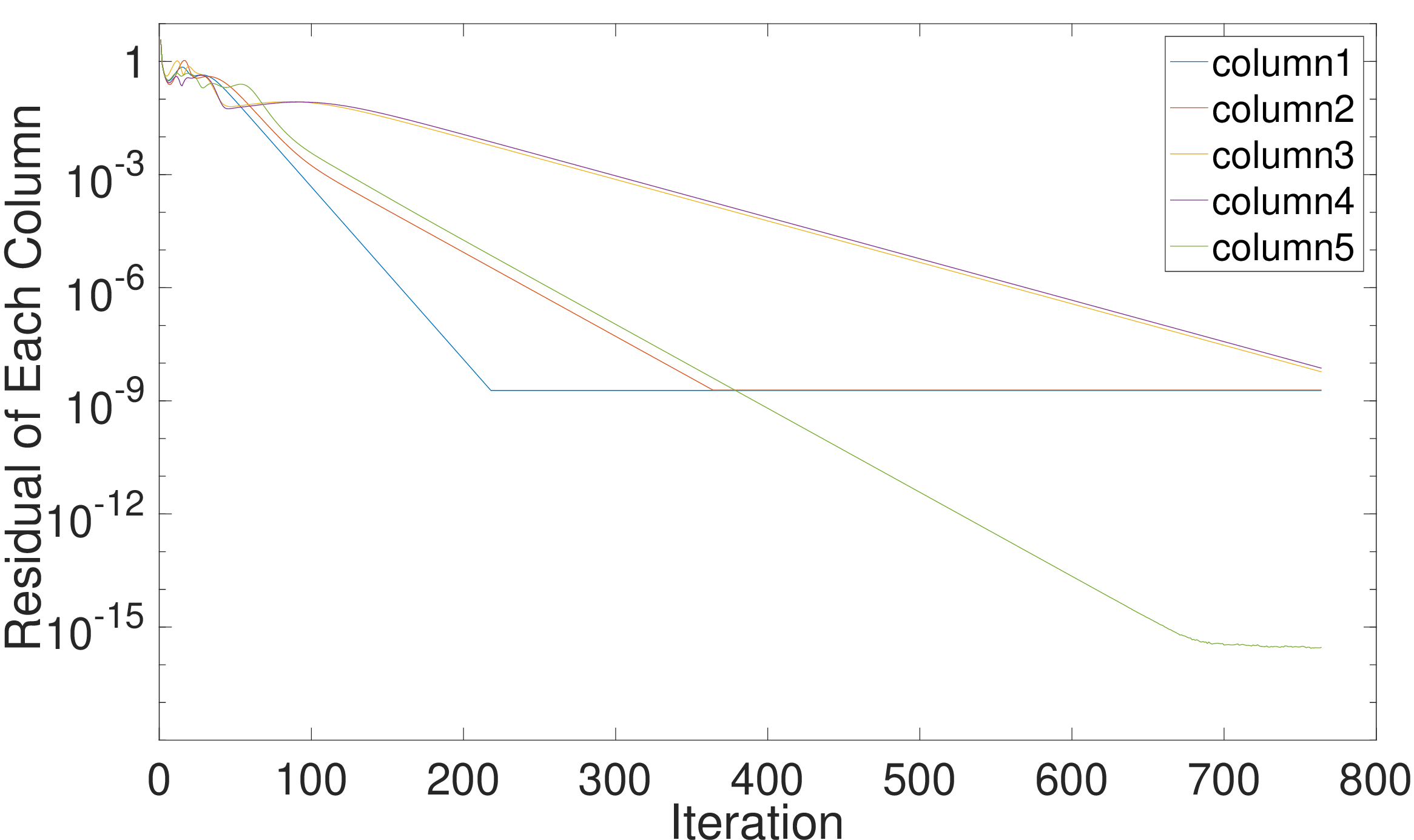}
    \caption{Convergence behavior of \TOMOne{} applying to $A_{uni}$
    (left), $A_{log}$ (middle), and $A_{ushape}$ (right) with fixed
    stepsize $\alpha = 0.4$ and column locking.}
    \label{fig:converge_rate_fixed_stepsize}
\end{figure}

\begin{figure}[htp]
    \includegraphics[width=0.333\textwidth]{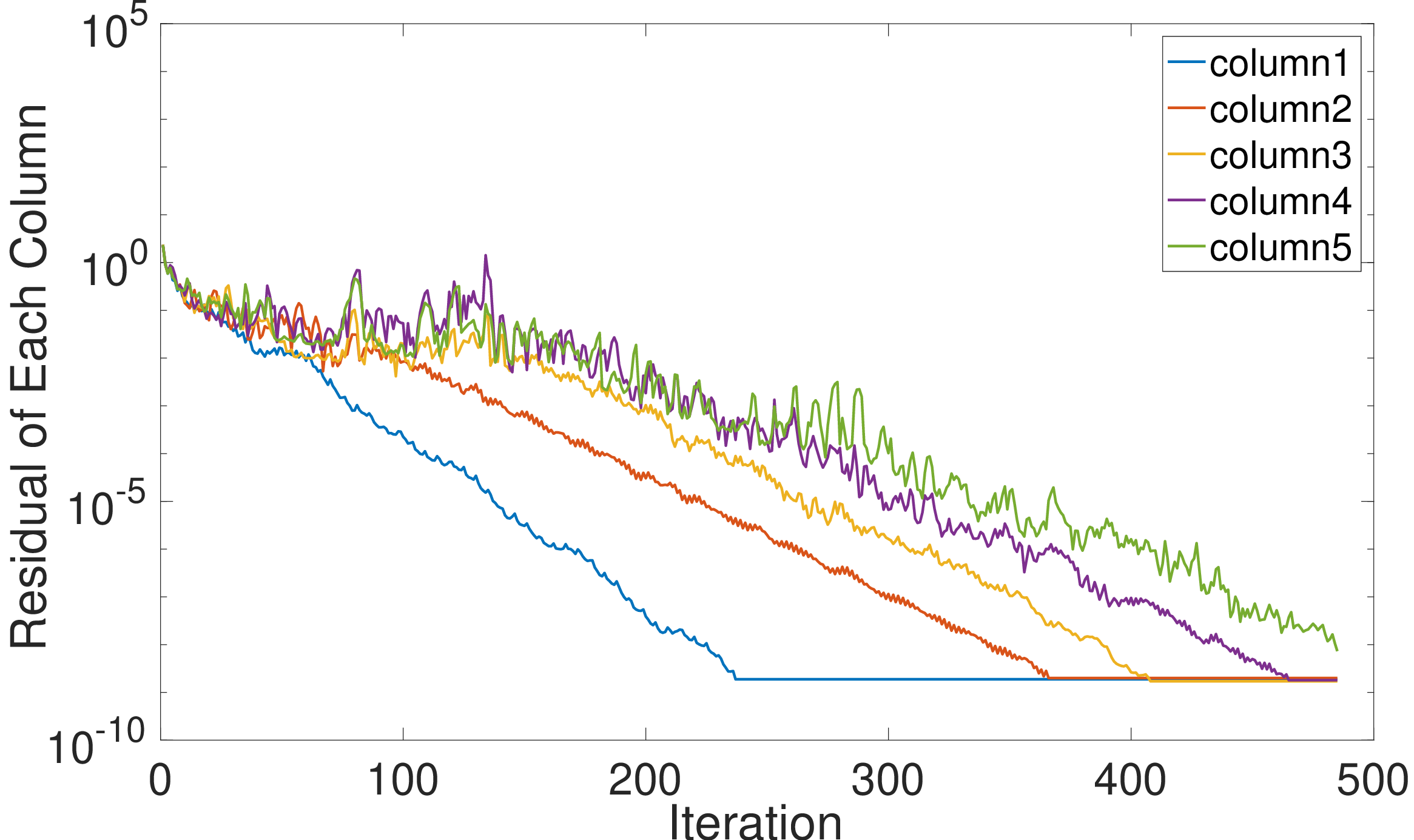}
    \includegraphics[width=0.333\textwidth]{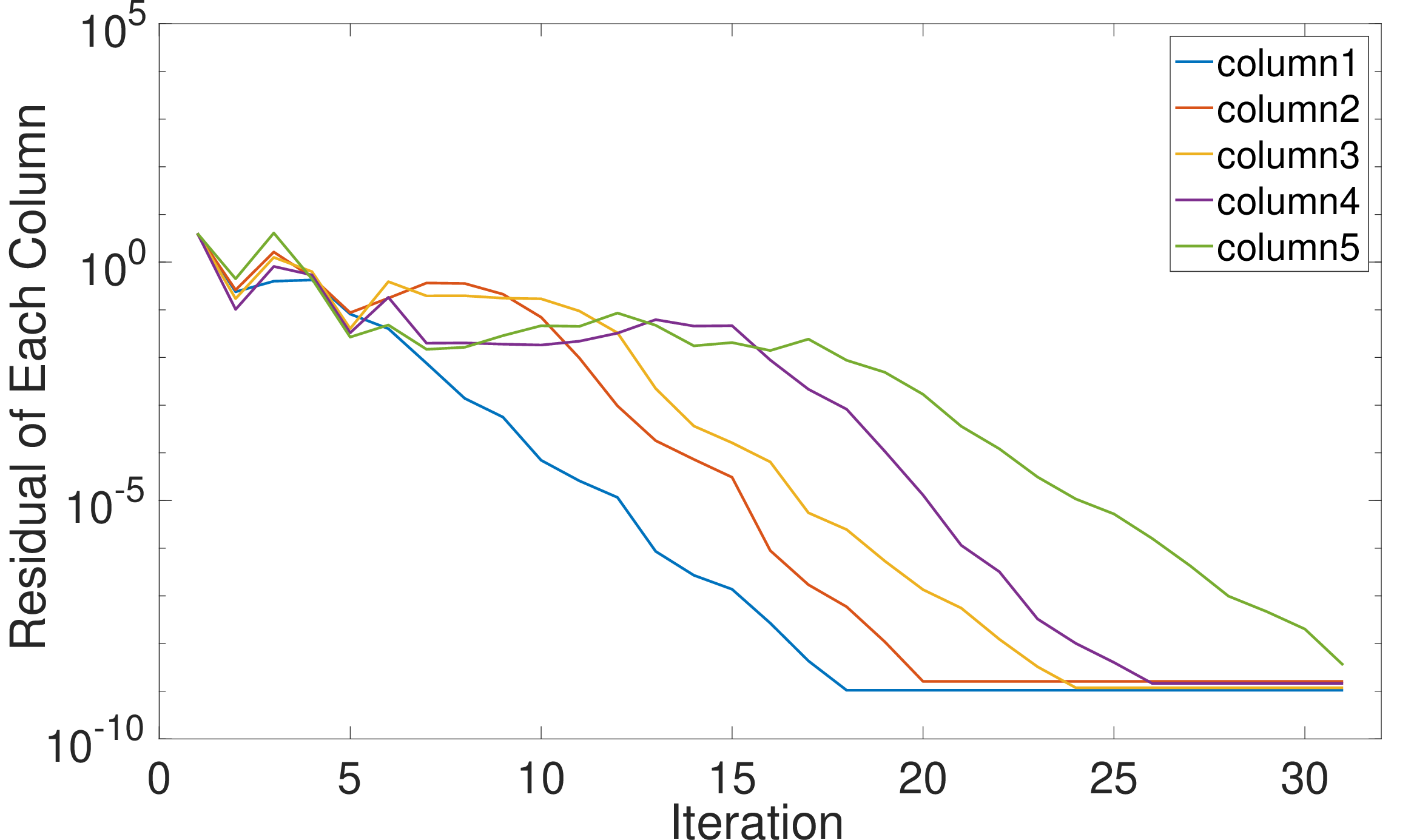}
    \includegraphics[width=0.333\textwidth]{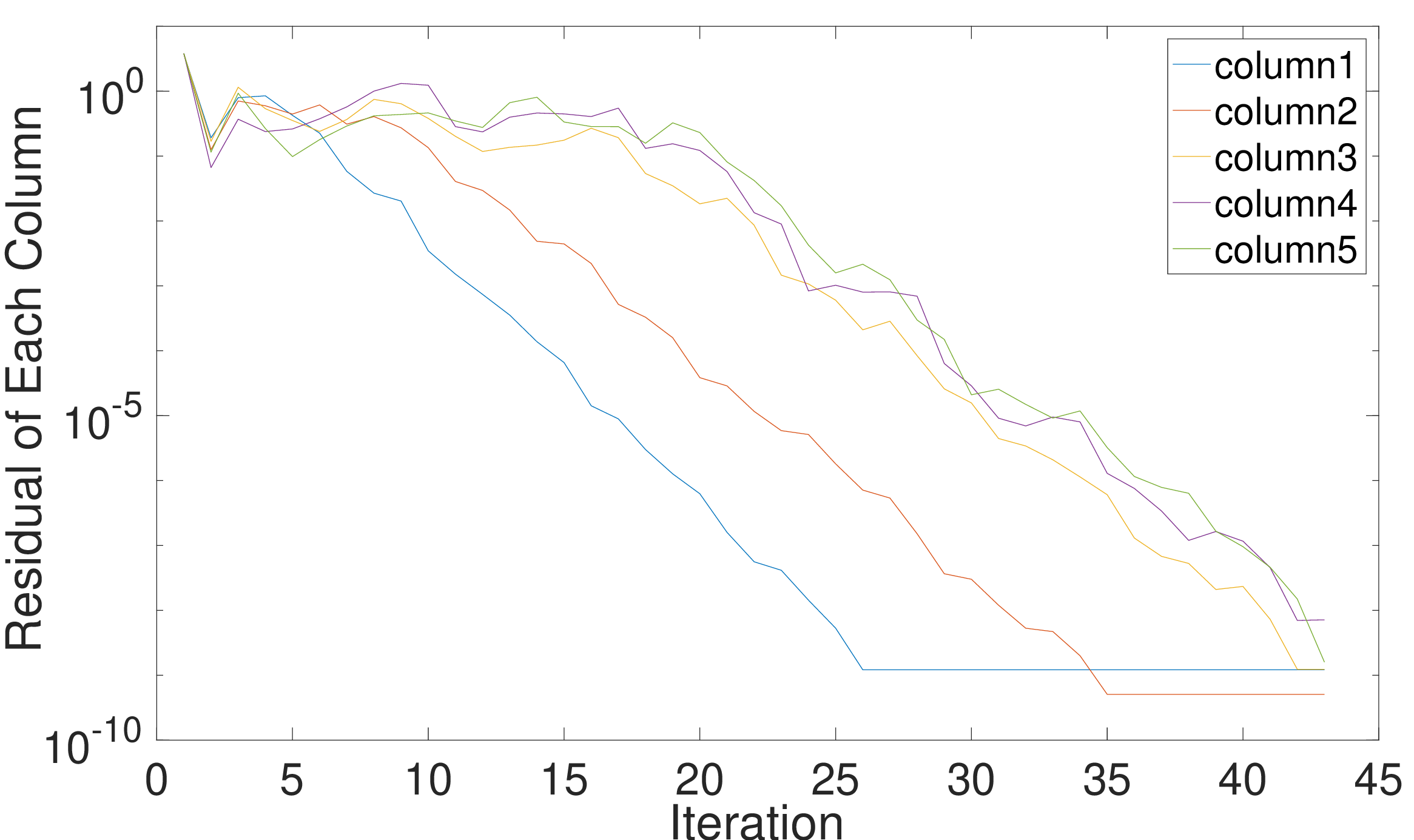}
    \caption{Convergence behavior of \TOMOne{} applying to $A_{uni}$
    (left), $A_{log}$ (middle), and $A_{ushape}$ (right) with CG,
    linesearch, and column locking.}
    \label{fig:converge_rate_cg}
\end{figure}

Figure~\ref{fig:converge_rate_fixed_stepsize} shows the convergence
behaviors of \TOMOne{} applied to three random matrices. Nonlinear
convergence is observed in all three figures for the first few iterations.
Linear convergence is then observed until convergence. This agrees with
our analysis.

\begin{table}[ht]
    \centering
    \begin{tabular}{lcccccc}
        \toprule
        & & \multicolumn{5}{c}{Convergence rate} \\
        \cmidrule{3-7}
        Matrix & & $\lambda_1$ & $\lambda_2$ & $\lambda_3$ & $\lambda_4$
        & $\lambda_5$ \\
        \toprule
        \multirow{2}{*}{$A_{log}$} & Reference rate &
        $0.7952$ & $0.8976$ & $0.9488$ & $0.9744$ & $0.9872$ \\
        & Numerical rate &
        $0.7952$ & $0.8976$ & $0.9488$ & $0.9744$ & $0.9872$ \\
        \bottomrule
    \end{tabular}
    \caption{ Local convergence rate of \TOMOne{} applying to
    $A_{log}$.} \label{tab:convergence_rate}
\end{table}

In Figure~\ref{fig:converge_rate_fixed_stepsize} left, all curves are
parallel to each other, and their convergence rates are the same, which
agrees with our analysis since $A_{uni}$ has all equal eigengaps. In
Figure~\ref{fig:converge_rate_fixed_stepsize} middle, curves have
different slopes and hence different convergence rates. Here we provide a
quantitative comparison of the convergence rates for $A_{log}$ in
Table~\ref{tab:convergence_rate}. We fit the slopes of curves and use them
as numerical rates, whereas reference rates are computed according
Theorem~\ref{thm:local-conv-obj1}. Table~\ref{tab:convergence_rate} shows
that numerical rates agree with reference rates up to four digits. Hence
we claim that the rate in Theorem~\ref{thm:local-conv-obj1} is tight. In
Figure~\ref{fig:converge_rate_fixed_stepsize} right, the first four curves
have convergence rates that agree with our theoretical results, whereas
the last one converges faster than expected. Its convergence rate is
theoretically upper bounded by that of previous columns but numerically is
faster. Through these numerical results, we claim that our theoretical
analysis of the local convergence rate provides a tight upper bound for
practice.

In Section~\ref{sec:localconv}, we prove the local convergence column by
column, \ie, the convergence of later column is proved if all earlier
columns are close to their stable fixed points. An interesting numerical
observation from Figure~\ref{fig:converge_rate_fixed_stepsize} is that the
linear convergences of columns may have some overlapping iterations, \eg,
there are a lot of iterations that all curves converge linearly parallelly
for matrix $A_{uni}$. Such overlapping leads to faster convergence for the
overall algorithm.

\subsubsection{Accelerating Techniques}
\label{sec:numres_techniques}

In this section, we investigate the accelerating techniques. The
convergence behaviors of \TOMOne{} are included in
Figure~\ref{fig:converge_rate_cg}. Overall, the convergence of \TOMOne{}
with accelerating techniques are much faster than that of vanilla
\TOMOne{}. Next, we provide more quantitative comparisons for column
locking and momentum accelerations for both \TOMOne{} and \TOMTwo{}.

Specifically, in this section, the tolerance $\epsilon$ used for stopping
criteria and column locking is $10^{-8}$. And each experiment is repeated
$500$ times, with random matrices and initial values. For the number of
iterations (Iter Num) and the number of matrix-vector multiplications
(Mat-Vec Num), we report the mean, max, and min among $500$ random tests.
In all tests, the linesearch is always enabled.

\begin{table}[ht]
    \centering
    \begin{tabular}{lcccccc}
        \toprule \multirow{2}{*}{Method} &
        \multicolumn{3}{c}{Iter Num} &
        \multicolumn{3}{c}{Mat-Vec Num} \\
        \cmidrule(lr){2-4} \cmidrule(lr){5-7}
        & Mean & Max & Min & Mean & Max & Min \\
        \toprule
        \TOMOne{} + CG +locking &
        642.2 & 800 & 554 & 4990.2 & 6905 & 4353 \\
        \TOMOne{} + CG &
        643.1 & 832 & 518 & 6431.4 & 8320 & 5180 \\
        \bottomrule
    \end{tabular}
    \caption{Performance comparison of \TOMOne{} applied to $A_{uni}$ with
    and without column locking. CG and exact linesearch are enabled.}
    \label{tab:comp_locking}
\end{table}

First, we show the advantage of column locking.
Table~\ref{tab:comp_locking} list the results for \TOMOne{} applied to
$A_{uni}$ with and without column locking. We observe that the numbers of
iterations remain the same with and without the column locking. However,
the number of matrix-vector multiplication is significantly reduced with
column locking, and hence the computational cost is reduced. Similar
results are observed for \TOMOne{} on other matrices and \TOMTwo{} as
well. We omit those results for the sack of brevity.

Then we explore the advantages of momentum and CG techniques. For
algorithms with vanilla momentum acceleration, the coefficient are chosen
as $\beta = 0.9$ for \eqref{eq:obj1} and $\beta = 0.95$ for
\eqref{eq:obj2}. Several different values of $\beta$ have been tested for
both objective functions, and we pick these $\beta$s for objective
functions with the fastest convergence.

\begin{table}[htp]
    \centering
    \begin{tabular}{lrrrrrrr}
        \toprule
        \multirow{2}{*}{\parbox{4.5em}{\centering Objective\\Function}} &
        \multirow{2}{*}{Method} & \multicolumn{3}{c}{Iter Num} &
        \multicolumn{3}{c}{Mat-Vec Num} \\
        \cmidrule(lr){3-5}
        \cmidrule(lr){6-8}
        & & Mean & Max & Min & Mean & Max & Min \\ 
        \toprule
        \multirow{6}{*}{\eqref{eq:obj1}}
        & \TOM+CG &  49.0 &  59 &  40 &  414.7 &   519 &  334 \\
        & OFM+CG  & 616.1 & 1881 & 333 & 6161.4 & 18810 & 3330 \\
        & \TOM+Momentum &   46.4 &   58 &  38 &   401.4 &   510 &  335 \\
        & OFM+Momentum  & 963.6 & 1468 & 614 & 9635.6 & 14680 & 6140 \\
        & \TOM+GD &    52.1 &    67 &    42 &    492.0 &    635 &    415 \\
        & OFM+GD  & 11460.7 & 17124 & 4591 & 114607.2 & 171240 & 75910 \\
        \midrule
        \multirow{6}{*}{\eqref{eq:obj2}}
        & \TOM+CG &  279.0 & 553 & 193 &  1071.0 & 1499 & 882 \\
        & OFM+CG  & 953.2 & 2500 & 550 & 9532.2 &  25000 & 5500 \\
        & \TOM+Momentum &  701.4 & 997 &  504 &  2217.0 &  2588 &  1840 \\
        & OFM+Momentum  & 1275.3 & 2033 & 738 & 12752.8 & 20330 & 7380 \\
        & \TOM+GD &  5150.7 & 9280 &  2663 & 12168.2 &  16500 &  7214 \\
        & OFM+GD  & 21222.2 & 30462 & 14156 & 212221.9 & 304620 & 141560 \\
        \bottomrule  
    \end{tabular}
    \caption{Performance comparison of \TOM{} and \OFM{} with and without
    momentum accelerating techniques for $A_{log}$. Here GD stands for the
    vanilla gradient descent method. Exact linesearch is enabled for all
    algorithms and column locking is enabled for \TOM{}.}
    \label{tab:comparison_log}
\end{table}

%For matrices with equal eigengaps, $A_{uni}$, we find that from
%Table~\ref{tab:comparison_uni} all \TOM{}s are slightly slower than
%their non-triangularized counterparts. Such a slow down is acceptable
%since \TOM{} converges to eigenvectors directly, which include a finite
%number of global minima, while its non-triangularized counterpart
%converges to the eigenspace, which includes infinity number of
%global minima.

Numerical results are summarized in Table~\ref{tab:comparison_log} for
$A_{log}$. In all cases, \TOM{}s converge in less number of iterations and
less number of matrix-vector multiplications. There are two reasons behind
the results. First, the convergences of earlier columns in \TOM{} are
faster than that of the last column, whereas the convergences of all
columns in \OFM{} are the same as the last column in \TOM{}. Second, in
\TOM{}, different linesearch stepsizes are applied to different columns,
whereas OFM uses a single stepsize for all columns, which is impacted by
the smallest eigengap. Overall, the computational costs of \TOM{} and
\OFM{} depend on the eigengap distribution of the matrix. For
$A_{log}$-like matrices, \TOM{} outperforms \OFM{}. Further, algorithms
with CG converge faster or equally fast as their momentum accelerated
versions with carefully chosen parameter $\beta$s. Hence we recommend CG
as the momentum acceleration since it is hyper-parameter free.

Through all these tests, our best choice is to use \TOM{} with CG, exact
linesearch, and column locking. As in the later sections, this
configuration will be the default \TOM{}, and we will focus on the
sparsity of eigenvectors.

\subsection{Synthetic Density Functional Theory}
\label{sec:numres_dft}

In this section we perform \TOM{} on a synthetic example from DFT
computation. The example is a second order differential operator on the
domain $[0,1]$ with periodic boundary condition,
\begin{equation} \label{eq:dft_eg}
    H(x) = -\Delta + V(x),
\end{equation}
where $-\Delta$ is the Laplace operator denoting the kinetic term and
$V(x)$ is a local potential with four Gaussian potential wells,
\begin{equation} \label{eq:dft_potential}
    V(x) = -\sum_{i = 1}^4 \alpha_i e^{-\frac{(x -
    \ell_i)^2}{2\sigma^2}}.
\end{equation}
The centers of these wells locate at $\ell_i = \frac{2i-1}{8}$, the depths
of the wells are $\alpha_i = 850 + 50 \times \mathrm{mod}(i,4)$, and the
constant width of these wells is $\sigma = 0.1$. This second order
differential operator, \eqref{eq:dft_eg}, can be viewed as the linear
operator in a self-consistent field iteration in DFT computation,
simulating four different atoms located periodically on a line. In this
example, we are interested in computing the low-lying four eigenpairs. The
associated matrix is obtained via discretizing the problem on a uniform
grid with $n = 500$ points, where the Laplace operator is discretized
using the central difference scheme. In Figure~\ref{fig:dft_eigenvector}
left, we plot the four eigenvectors corresponding to smallest four
eigenvalues. Due to the localized potential and periodicity, the
eigenvectors associated with low-lying eigenvalues have localized
property, which means that these eigenvectors are sparse.

\begin{figure}[ht]
    \includegraphics[width=0.333\textwidth]{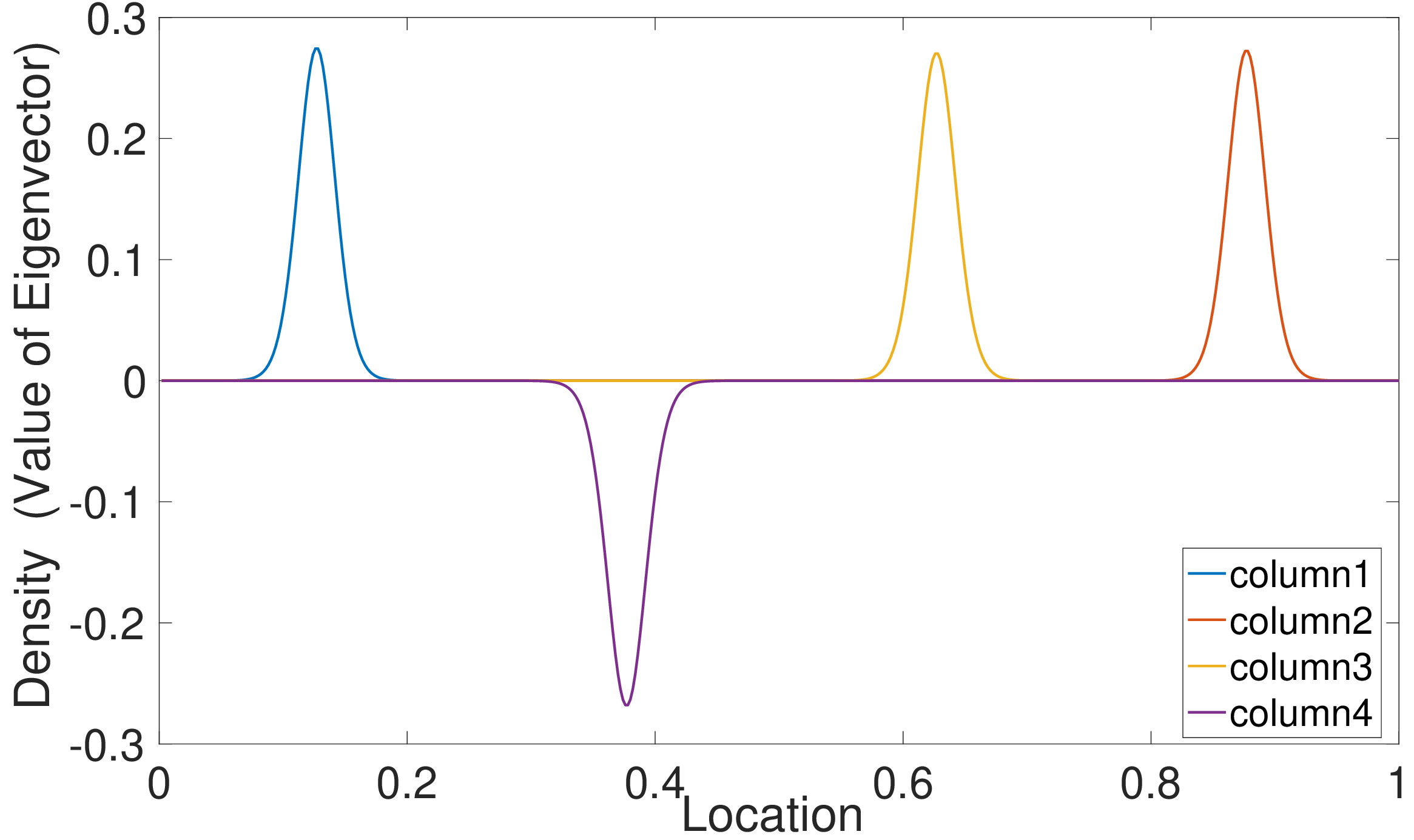}
    \includegraphics[width=0.333\textwidth]{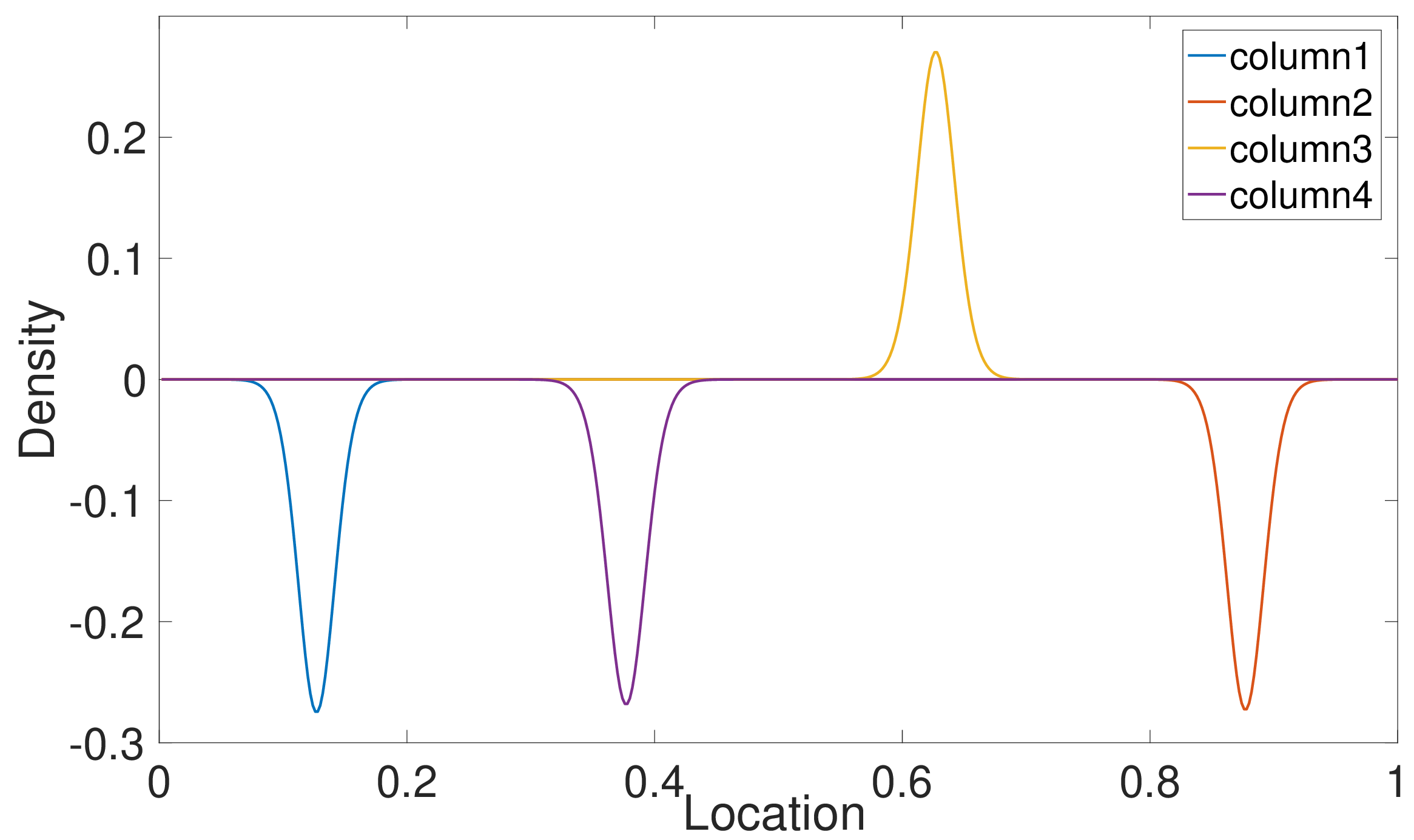}
    \includegraphics[width=0.333\textwidth]{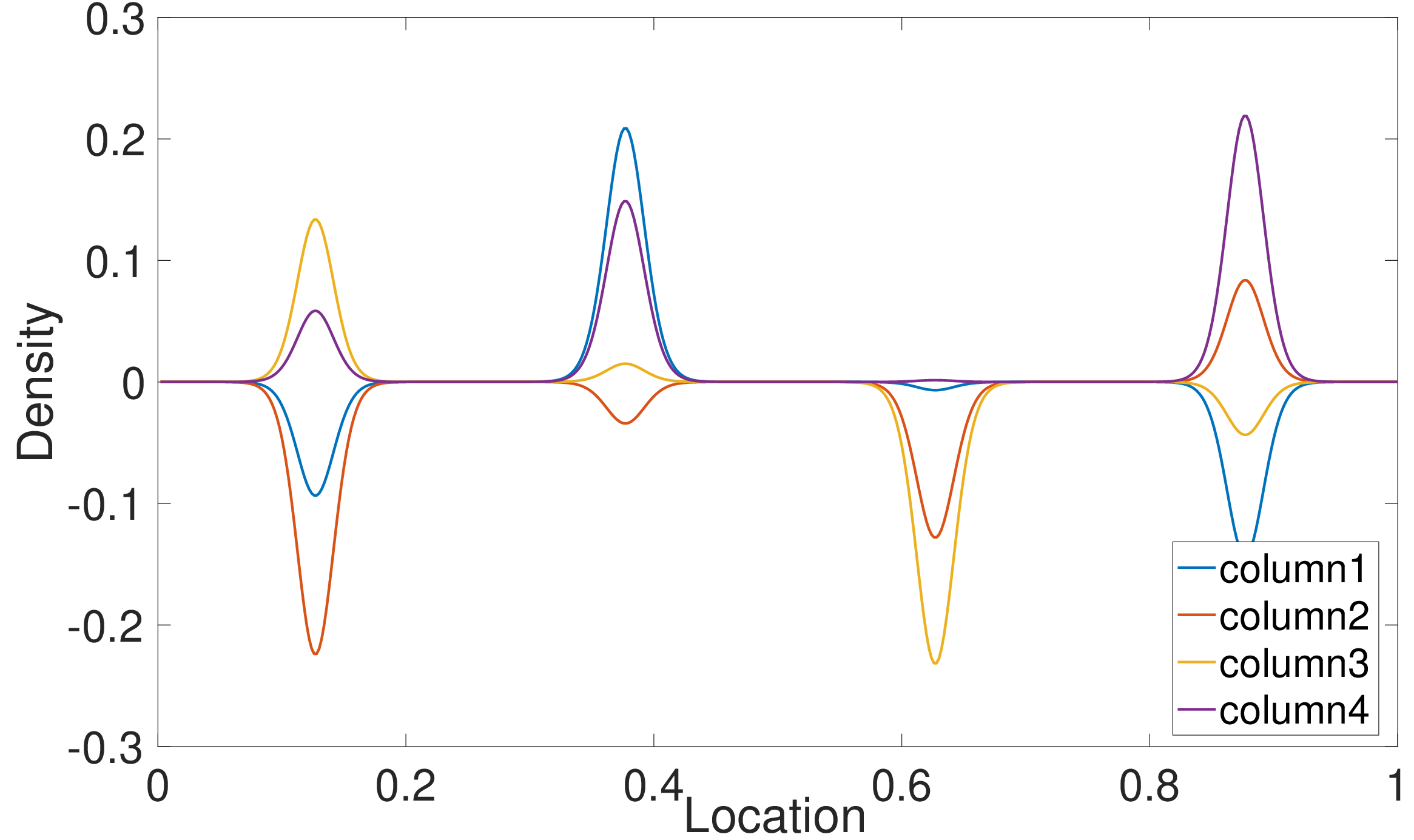}
    \caption{Left figure plots the ground truth eigenvectors associated
    with four low-lying eigenvalues of problem \eqref{eq:dft_eg}; middle
    figure plots scaled four convergent columns from \TOMOne{}; and right
    figure plots scaled four convergent columns from the \OFMOne{}.}
    \label{fig:dft_eigenvector}
\end{figure}

\begin{table}[ht]
    \centering
    \begin{tabular}{lccccc}
        \toprule
        Method & Iter Num & Mat-Vec Num & NNZ &
        $e_{vec}$ & $e_{val}$\\
        \midrule
        \TOM{}+CG & 567.5 & 5134.2 & 1328 & $8.26\times 10^{-8}$
        & $2.00\times 10^{-15}$\\
        OFM+CG    & 413.6 & 4135.7 & 4974.0 & --
        & $2.50\times 10^{-15}$\\
        \bottomrule
    \end{tabular}
    \caption{Performance comparison of \TOMOne{} and \OFMOne{} applied to
    \eqref{eq:dft_eg}.}
    \label{tab:numres_dft}
\end{table}

Numerical results are demonstrated in Figure~\ref{fig:dft_eigenvector} and
Table~\ref{tab:numres_dft}. The tolerance is $10^{-8}$, and each algorithm
is perform 100 times with random initial states.
Figure~\ref{fig:dft_eigenvector} middle plots the scaled four convergent
columns from \TOMOne{} and the right figure plots scaled four convergent
columns from the non-triangularized counterpart.
Table~\ref{tab:numres_dft} includes the number of iterations, the number
of matrix-vector multiplications, the number of nonzeros (NNZ), and the
accuracies. In Table~\ref{tab:numres_dft}, the NNZ is the number of
entries with absolute values greater than $10^{-5}$. The NNZ of the ground
truth eigenvectors is 1328. Since \OFMOne{} does not provide eigenvectors
without an extra orthogonalization step, the accuracy of eigenvectors is
not available.

According to Figure~\ref{fig:dft_eigenvector}, the convergent columns of
\TOMOne{} recover the eigenvectors up to a sign difference. While the
convergent columns of \OFMOne{} mix all four eigenvectors and have
nonzeros near all four Gaussian centers. Hence the sparsity of
eigenvectors is destroyed. Overall, \TOMOne{} achieves $75\%$ saving in
NNZ comparing to that of \OFMOne{}. Since the memory cost is a key
bottleneck in many DFT computations, such a saving is important.
Meanwhile, regarding the number of iterations and the number of
matrix-vector multiplications, \TOMOne{} is slightly more expensive than
\OFMOne{}. Hence there is a trade-off between time and space. If the
parallelizability of \TOM{} is further taken into account, then \TOM{}
would be a valuable alternative eigensolver for DFT.

\subsection{Full Configuration Interaction}
\label{sec:numres_fci}

This section solves the low-lying eigenpairs for a two-dimensional Hubbard
model under the FCI framework. The Hubbard model is widely used in
solid-state physics, which only considers the neighboring hopping and
on-site interaction. Under the FCI framework, the matrix size scales
factorially with respect to the problem size and the number of
electrons. The eigenvectors associated with low-lying eigenvalues are
sparse. FCI problems are the most important applications of \TOM{}.

The Hamiltonian operator in the second quantization notation is,
\begin{equation}
    \hat{H} = -t \sum_{\langle r,r'\rangle, \sigma}
    \hat{a}_{r,\sigma}^\dagger \hat{a}_{r', \sigma}
    + U \sum_{r} \hat{a}_{r,\uparrow}^\dagger \hat{a}_{r,\uparrow}
    \hat{a}_{r,\downarrow}^\dagger \hat{a}_{r,\downarrow}
\end{equation}
where $t$ is the hopping strength, $U$ is the interaction strength, $r,r'$
are lattice index, $\langle r, r'\rangle$ means that $r$ and $r'$ are
neighbors on the lattice, $\hat{a}_{r,\sigma}^\dagger$ and
$\hat{a}_{r,\sigma}$ denotes the creation and annihilation operator of an
electron with spin $\sigma$ on $r$. The matrix in this section is
generated from the Hubbard model in momentum space. The Fourier transform
of the creation and annihilation operator is $\hat{a}_{k,\sigma} =
\frac{1}{\sqrt{N^{site}}} \sum_r e^{\imath k\cdot r} \hat{a}_{r,\sigma}$,
where $k$ is the wave number and $N^{site}$ is the number of lattice
sites. The Hamiltonian operator in momentum space is,
\begin{equation} \label{eq:ham-fci}
    \hat{H} = t \sum_{k, \sigma} -2 (\cos{k_1} + \cos{k_2})
    \hat{a}_{k,\sigma}^\dagger \hat{a}_{k, \sigma} +
    \frac{U}{N^{site}} \sum_{k,p,q} \hat{a}_{p-q,\uparrow}^\dagger
    \hat{a}_{k+q,\downarrow}^\dagger \hat{a}_{k,\downarrow}
    \hat{a}_{p,\uparrow}
\end{equation}
for $k = (k_1,k_2)$.

We adopt a 2D Hubbard model on a lattice of size $4 \times 4$ with $8$
electrons (4 spin-up and 4 spin-down). The strength of hopping and
interaction are $t = 1$ and $U = 0.25 N^{site}$ respectively. The FCI
matrix has diagonal entries between $-20$ and $20$ and off-diagonal
entries being $\pm 0.25$. The matrix size is about $(2\cdot 10^{5})\times
(2\cdot 10^{5})$. We compute the smallest $p = 10$ eigenpairs. \TOMOne{}
and \OFMOne{} are applied to address this problem. The tolerance is
$10^{-10}$. For each algorithm, we perform 100 times with random initial
states. The mean of the number of iterations, the number of matrix-vector
multiplications, NNZ, and accuracies are reported in Table~\ref{tab:fci}.
Similarly, the NNZ is the number of entries with a magnitude greater than
$10^{-5}$.

\begin{table}[ht]  
    \centering
    \begin{tabular}{lrrrcc}
        \toprule
        Method & Iter Num & Mat-Vec Num & NNZ &
        $e_{vec}$ & $e_{val}$\\
        \midrule
        \TOM{}+CG & 1253.0 & 7708.6 & $1.115 \times 10^{6} $
        & $3.308 \times 10^{-8}$ & $5.597 \times 10^{-12}$\\
        OFM+CG    & 1381.7 & 13817.2 & $1.508 \times 10^{6}$
        & --                    & $5.197 \times 10^{-12}$\\
        %Lanczos    & 469.0 & 469.0 & $1.115 \times 10^{6} $
        %& --                    & --\\
        \bottomrule
    \end{tabular}
    \caption{Performance comparison of \TOMOne{} and \OFMOne{} on
    \eqref{eq:ham-fci}}
    \label{tab:fci}
\end{table}

According to Table~\ref{tab:fci}, \TOMOne{} requires less number of
iterations and matrix-vector multiplications than \OFMOne{}. In FCI
problems, the number of matrix-vector multiplications is proportional to
the actual runtime. Hence we expect that \TOMOne{} would achieve better
runtime than \OFMOne{} on FCI problems. Notice that the multiplicity of
some eigenvalues in our FCI matrix is not one. Hence the stable fixed
points are subspaces. NNZ for \TOMOne{} varies over 100 executions and
Table~\ref{tab:fci} reports its mean. On average, \TOMOne{} achieves
better sparsity comparing to \OFMOne{}. Through our numerical results,
\TOMOne{} outperforms \OFMOne{} on the FCI problem.

\begin{remark}

Solving practical FCI problems is the major target in designing \TOM{}.
In FCI problems, low-lying eigenvalues and the associated eigenvectors are
computed as the ground state and low-lying excited states. Almost all
traditional eigensolvers are not applicable to FCI problems. \OFM{} with
coordinate-wise descent method is an option to obtain the sparse
eigenvectors. While the arbitrary rotation would significantly increase
the memory cost. Hence, we design \TOM{} converging to the sparse
eigenvectors directly. According to our numerical result of the FCI
problem, \TOM{} outperforms \OFM{} and is a more promising method to
address FCI problems. This paper is the first step toward computing the
FCI excited states. Coupling \TOM{} together with a parallelized
coordinate-wise descent method, we would be able to address FCI problems
for transition metals of interest. 

\end{remark}

\section{Conclusion and Discussion}
\label{sec:conclusion}

In this work, we introduce a novel \TOM{} for solving extreme eigenvalue
problems. Using \TOM{}, the eigenpairs are directly solved via
orthogonalization-free iterative methods, where the orthogonalization-free
feature is crucial for large-scale eigenvalue problems with sparse
eigenvectors. Two specific algorithms, namely \TOMOne{} and \TOMTwo{}, are
proposed for \eqref{eq:obj1} and \eqref{eq:obj2}. Global convergences are
guaranteed for almost all initial states~\cite{Gao2021}. Locally, we prove
that, in neighbors of stable fixed points, \TOMOne{} converges linearly.
The convergence proof can be adapted to show the linear convergence of
\TOMTwo{} as well. Although the proposed algorithms are different from
general gradient-based algorithms, acceleration techniques, including
momentum, linesearch, and column locking, still work effectively.
According to numerical results on both synthetic examples and the example
from practice, \TOMOne{} and \TOMTwo{} converge efficiently and obtain the
sparse eigenvectors without any orthogonalization step.

There are many future directions. As has been mentioned before, \TOM{} is
applicable to many other objective functions beyond \eqref{eq:obj1} and
\eqref{eq:obj2}. We would like to apply \TOM{} to other objective
functions and obtain powerful algorithms. Moreover, we claim the advantage
of \TOM{} in keeping sparsity towards convergent. It is an interesting
future direction to explore truncation techniques and coordinate-wise
methods so that the sparsity is preserved throughout iterations. The
application to FCI problems would be of great interest to many other
communities, including computational physics, chemistry, and material
science, etc. In addition to the above two directions,
orthogonalization-free algorithms are friendly to parallel computing.
Hence the parallelization of these proposed algorithms is another future
direction.

%=====================================================================

\medskip
\noindent

{\bf Acknowledgments.} The authors thank Jianfeng Lu and Zhe Wang for
helpful discussions. YL is supported in part by the US Department of
Energy via grant de-sc0019449. WG is supported in part by National Science
Foundation of China under Grant No. 11690013, U1811461.

\bibliographystyle{apalike} \bibliography{library}

\begin{thebibliography}{}

\bibitem[Banerjee et~al., 2016]{Banerjee2016}
Banerjee, A.~S., Lin, L., Hu, W., Yang, C., and Pask, J.~E. (2016).
\newblock {C}hebyshev polynomial filtered subspace iteration in the
  discontinuous {G}alerkin method for large-scale electronic structure
  calculations.
\newblock {\em J. Chem. Phys.}, 145(15):154101.

\bibitem[Bottou et~al., 2018]{Bottou2018}
Bottou, L., Curtis, F.~E., and Nocedal, J. (2018).
\newblock Optimization methods for large-scale machine learning.
\newblock {\em SIAM Rev.}, 60(2):223--311.

\bibitem[Brouder et~al., 2007]{Brouder2007}
Brouder, C., Panati, G., Calandra, M., Mourougane, C., and Marzari, N. (2007).
\newblock Exponential localization of {W}annier functions in insulators.
\newblock {\em Phys. Rev. Lett.}, 98(4):046402.

\bibitem[Corsetti, 2014]{Corsetti2014}
Corsetti, F. (2014).
\newblock The orbital minimization method for electronic structure calculations
  with finite-range atomic basis sets.
\newblock {\em Comput. Phys. Commun.}, 185(3):873--883.

\bibitem[Dai et~al., 2019a]{Dai2019a}
Dai, X., Wang, Q., and Zhou, A. (2019a).
\newblock Gradient flow based discretized {K}ohn-{S}ham density functional
  theory.
\newblock http://arxiv.org/abs/1907.06321.

\bibitem[Dai et~al., 2019b]{Dai2019b}
Dai, X., Zhang, L., and Zhou, A. (2019b).
\newblock Adaptive step size strategy for orthogonality constrained line search
  methods.
\newblock http://arxiv.org/abs/1906.02883.

\bibitem[Davidson, 1975]{Davidson1975}
Davidson, E.~R. (1975).
\newblock The iterative calculation of a few of the lowest eigenvalues and
  corresponding eigenvectors of large real-symmetric matrices.
\newblock {\em J. Comput. Phys.}, 17(1):87--94.

\bibitem[Gao et~al., 2018]{Gao2018}
Gao, B., Liu, X., Chen, X., and Yuan, Y.~X. (2018).
\newblock A new first-order algorithmic framework for optimization problems
  with orthogonality constraints.
\newblock {\em SIAM J. Optim.}, 28(1):302--332.

\bibitem[Gao et~al., 2019]{Gao2019}
Gao, B., Liu, X., and Yuan, Y.-x. (2019).
\newblock Parallelizable algorithms for optimization problems with
  orthogonality constraints.
\newblock {\em SIAM J. Sci. Comput.}, 41(3):A1949--A1983.

\bibitem[Gao et~al., 2021]{Gao2021}
Gao, W., Li, Y., and Lu, B. (2021).
\newblock Global convergence of triangularized orthogonalization-free method
  for solving extreme eigenvalue problems.
\newblock https://arxiv.org/abs/2110.06212.

\bibitem[Golub and {Van Loan}, 2013]{Golub2013}
Golub, G.~H. and {Van Loan}, C.~F. (2013).
\newblock {\em Matrix Computations}.
\newblock The Johns Hopkins University Press, 4th edition.

\bibitem[Huang et~al., 2015]{Huang2015}
Huang, W., Gallivan, K.~A., and Absil, P.-A. (2015).
\newblock A {B}royden class of quasi-{N}ewton methods for {R}iemannian
  optimization.
\newblock {\em SIAM J. Optim.}, 25(3):1660--1685.

\bibitem[Knowles and Handy, 1984]{Knowles1984}
Knowles, P.~J. and Handy, N.~C. (1984).
\newblock A new determinant-based full configuration interaction method.
\newblock {\em Chem. Phys. Lett.}, 111(4-5):315--321.

\bibitem[Knyazev, 2001]{Knyazev2001}
Knyazev, A.~V. (2001).
\newblock Toward the optimal preconditioned eigensolver: Locally optimal block
  preconditioned conjugate gradient method.
\newblock {\em SIAM J. Sci. Comput.}, 23(2):517--541.

\bibitem[Lee et~al., 2019]{Lee2019}
Lee, J.~D., Panageas, I., Piliouras, G., Simchowitz, M., Jordan, M.~I., and
  Recht, B. (2019).
\newblock First-order methods almost always avoid strict saddle points.
\newblock {\em Math. Program.}, 176(1-2):311--337.

\bibitem[Lei et~al., 2016]{Lei2016}
Lei, Q., Zhong, K., and Dhillon, I.~S. (2016).
\newblock Coordinate-wise power method.
\newblock In Lee, D.~D., Sugiyama, M., Luxburg, U.~V., Guyon, I., and Garnett,
  R., editors, {\em Adv. Neural Inf. Process. Syst. 29}, pages 2064--2072.
  Curran Associates, Inc.

\bibitem[Li and Lu, 2019]{Li2019}
Li, Y. and Lu, J. (2019).
\newblock Bold diagrammatic {M}onte {C}arlo in the lens of stochastic iterative
  methods.
\newblock {\em Trans. Math. Its Appl.}, 3(1):1--17.

\bibitem[Li and Lu, 2020]{Li2020a}
Li, Y. and Lu, J. (2020).
\newblock Optimal orbital selection for full configuration interaction
  ({O}pt{O}rb{FCI}): Pursuing basis set limit under budget.
\newblock http://arxiv.org/abs/2004.04205.

\bibitem[Li et~al., 2019]{Li2019c}
Li, Y., Lu, J., and Wang, Z. (2019).
\newblock Coordinatewise descent methods for leading eigenvalue problem.
\newblock {\em SIAM J. Sci. Comput.}, 41(4):A2681--A2716.

\bibitem[Li and Yang, 2017]{Li2017b}
Li, Y. and Yang, H. (2017).
\newblock Spectrum slicing for sparse {H}ermitian definite matrices based on
  {Z}olotarev's functions.
\newblock http://arxiv.org/abs/1701.08935.

\bibitem[Liu, 2021]{Liu2021}
Liu, W. (2021).
\newblock An algorithm for solving eigenvectors based on unconstrained
  optimization problem.
\newblock Master's thesis, Fudan University.

\bibitem[Liu et~al., 2015]{Liu2015c}
Liu, X., Wen, Z., and Zhang, Y. (2015).
\newblock An efficient {G}auss-{N}ewton algorithm for symmetric low-rank
  product matrix approximations.
\newblock {\em SIAM J. Optim.}, 25(3):1571--1608.

\bibitem[Lu and Thicke, 2017]{Lu2017a}
Lu, J. and Thicke, K. (2017).
\newblock Orbital minimization method with l1 regularization.
\newblock {\em J. Comput. Phys.}, 336:87--103.

\bibitem[Lu and Yang, 2017]{Lu2017}
Lu, J. and Yang, H. (2017).
\newblock Preconditioning orbital minimization method for planewave
  discretization.
\newblock {\em Multiscale Model. Simul.}, 15(1):254--273.

\bibitem[Mauri et~al., 1993]{Mauri1993}
Mauri, F., Galli, G., and Car, R. (1993).
\newblock Orbital formulation for electronic-structure calculations with linear
  system-size scaling.
\newblock {\em Phys. Rev. B}, 47(15):9973--9976.

\bibitem[Ordej{\'{o}}n et~al., 1993]{Ordejon1993}
Ordej{\'{o}}n, P., Drabold, D.~A., Grumbach, M.~P., and Martin, R.~M. (1993).
\newblock Unconstrained minimization approach for electronic computations that
  scales linearly with system size.
\newblock {\em Phys. Rev. B}, 48(19):14646--14649.

\bibitem[{Peter Tang} and Polizzi, 2014]{PeterTang2014}
{Peter Tang}, P.~T. and Polizzi, E. (2014).
\newblock {FEAST} as a subspace iteration eigensolver accelerated by
  approximate spectral projection.
\newblock {\em SIAM J. Matrix Anal. Appl.}, 35(2):354--390.

\bibitem[Polak and Ribiere, 1969]{M2AN_1969__3_1_35_0}
Polak, E. and Ribiere, G. (1969).
\newblock Note sur la convergence de m\'ethodes de directions conjugu\'ees.
\newblock {\em ESAIM: Mathematical Modelling and Numerical Analysis -
  Mod\'elisation Math\'ematique et Analyse Num\'erique}, 3(R1):35--43.

\bibitem[Stubbs et~al., 2020]{Stubbs2020}
Stubbs, K.~D., Watson, A.~B., and Lu, J. (2020).
\newblock Existence and computation of generalized {W}annier functions for
  non-periodic systems in two dimensions and higher.
\newblock http://arxiv.org/abs/2003.06676.

\bibitem[Vecharynski et~al., 2015]{Vecharynski2015}
Vecharynski, E., Yang, C., and Pask, J.~E. (2015).
\newblock A projected preconditioned conjugate gradient algorithm for computing
  many extreme eigenpairs of a {H}ermitian matrix.
\newblock {\em J. Comput. Phys.}, 290:73--89.

\bibitem[Wang et~al., 2019]{Wang2019}
Wang, Z., Li, Y., and Lu, J. (2019).
\newblock Coordinate descent full configuration interaction.
\newblock {\em J. Chem. Theory Comput.}, 15(6):3558--3569.

\bibitem[Wen et~al., 2016]{Wen2016}
Wen, Z., Yang, C., Liu, X., and Zhang, Y. (2016).
\newblock Trace-penalty minimization for large-scale eigenspace computation.
\newblock {\em J. Sci. Comput.}, 66(3):1175--1203.

\bibitem[Wen and Yin, 2013]{Wen2013}
Wen, Z. and Yin, W. (2013).
\newblock A feasible method for optimization with orthogonality constraints.
\newblock {\em Math. Program.}, 142(1-2):397--434.

\bibitem[Yu et~al., 2019]{Yu2019}
Yu, V. W.-z., Campos, C., Dawson, W., Garc{\'{i}}a, A., Havu, V., Hourahine,
  B., Huhn, W.~P., Jacquelin, M., Jia, W., Ke{\c{c}}eli, M., Laasner, R., Li,
  Y., Lin, L., Lu, J., Moussa, J., Roman, J.~E., V{\'{a}}zquez-Mayagoitia,
  {\'{A}}., Yang, C., and Blum, V. (2019).
\newblock {ELSI} -- an open infrastructure for electronic structure solvers.
\newblock http://arxiv.org/abs/1912.13403.

\bibitem[Yu et~al., 2018]{Yu2018}
Yu, V. W.-z., Corsetti, F., Garc{\'{i}}a, A., Huhn, W.~P., Jacquelin, M., Jia,
  W., Lange, B., Lin, L., Lu, J., Mi, W., Seifitokaldani, A.,
  V{\'{a}}zquez-Mayagoitia, {\'{A}}., Yang, C., Yang, H., and Blum, V. (2018).
\newblock {ELSI}: A unified software interface for {K}ohn--{S}ham electronic
  structure solvers.
\newblock {\em Comput. Phys. Commun.}, 222:267--285.

\bibitem[Zhang et~al., 2014]{Zhang2014}
Zhang, X., Zhu, J., Wen, Z., and Zhou, A. (2014).
\newblock Gradient type optimization methods for electronic structure
  calculations.
\newblock {\em SIAM J. Sci. Comput.}, 36(3):C265--C289.

\bibitem[Zhou et~al., 2006]{Zhou2006}
Zhou, Y., Saad, Y., Tiago, M.~L., and Chelikowsky, J.~R. (2006).
\newblock Self-consistent-field calculations using {C}hebyshev-filtered
  subspace iteration.
\newblock {\em J. Comput. Phys.}, 219(1):172--184.

\end{thebibliography}

\appendix

\section{Proof of Theorem~\ref{thm:stationarypt-obj2}}
\label{app:stationarypt-obj2}

\begin{proof}[\bf Proof of Theorem~\ref{thm:stationarypt-obj2}]

All fixed points of \eqref{eq:triofm-obj2} satisfy $g_2(X) = 0$. We first
analyze the fixed points for a single column case and then complete the
proof by induction. Notations used in this proof are the same as that in
the proof of Theorem~\ref{thm:stationarypt-obj1}.

We denote the single column $X$ as $x$. Obviously, when $x = 0$, we have
$g_2(x) = 0$. Now, consider the nontrivial case $x \neq 0$. The equality
$g_2(x) = 0$ can be expanded as,
\begin{equation} \label{eq:eigzeroobj2}
    \left( (2 - x^\top x)A - x^\top A x I \right) x = 0.
\end{equation}
According to \eqref{eq:eigzeroobj2}, for nonzero $x$, the matrix $B = (2 -
x^\top x)A - x^\top A x I$ must has a zero eigenvalue and $x$ lies in its
corresponding eigenspace. When $x^\top x = 2$, the matrix $B = x^\top A x
I$ does not have zero eigenvalue due to the negativity assumption on $A$.
Hence $x$ is parallel to one of $A$'s eigenvector, \ie, $Ax = \lambda x$.
Substituting this into \eqref{eq:eigzeroobj2}, we obtain,
\begin{equation}
    2 (1 - x^\top x) \lambda x = 0.
\end{equation}
Since $\lambda < 0$ and $x \neq 0$, we have $x^\top x = 1$. Hence we
conclude that for $g_2(x) = 0$, $x$ is either a zero vector or an
eigenvector of $A$.

Now we consider multicolumn case. The first column of $g_2(X) = 0$ is the
same as \eqref{eq:eigzeroobj2}. Hence $X_1 = U P_1 S_1$.

Assume the first $i$ columns of $X$ obey $X_i = U P_i S_i$. Then the
$(i+1)$-th column of $g_2(X) = 0$ is
\begin{equation} \label{eq:eigzeroiobj2}
    2A x_{i+1} - Ax_{i+1} x_{i+1}^\top x_{i+1} - x_{i+1}
    x_{i+1}^\top A x_{i+1} - AX_i X_i^\top x_{i+1} - X_i X_i^\top
    Ax_{i+1} = 0.
\end{equation}
Obviously, if $x_{i+1} = 0$, then \eqref{eq:eigzeroiobj2} holds. When
$x_{i+1} \neq 0$, we left multiply \eqref{eq:eigzeroiobj2} with
$X_i^\top$, adopt the commuting property of diagonal matrices, and obtain,
\begin{equation} \label{eq:eigzeroxtiobj2}
    \begin{split}
        0 = & S_i P_i^\top \left( 2\Lambda - x_{i+1}^\top
        x_{i+1} \Lambda - x_{i+1}^\top A x_{i+1} I - \Lambda P_i
        P_i^\top - \Lambda \right) U^\top x_{i+1} \\
        = & - S_i P_i^\top \left( x_{i+1}^\top x_{i+1} \Lambda
        + x_{i+1}^\top A x_{i+1} I \right) U^\top x_{i+1} \\
    \end{split}
\end{equation}
where the second equality adopts the fact that $P_i^\top \Lambda P_i
P_i^\top  = P_i^\top \Lambda$.  Due to the negativity of $A$, we notice
that $x_{i+1}^\top x_{i+1} \Lambda + x_{i+1}^\top A x_{i+1} I$ is a
diagonal matrix with strictly negative diagonal entries. Hence the
equality \eqref{eq:eigzeroxtiobj2} is equivalent to
\begin{equation} \label{eq:eigzeroshortobj2}
    S_i P_i^\top U^\top x_{i+1} = 0.
\end{equation}
As long as \eqref{eq:eigzeroshortobj2} holds, we have $X_i^\top x_{i+1} =
0$ and $X_i^\top A x_{i+1} = 0$. Therefore, solving
\eqref{eq:eigzeroiobj2} can be addressed via solving
\begin{equation}
    2A x_{i+1} - Ax_{i+1} x_{i+1}^\top x_{i+1} - x_{i+1}
    x_{i+1}^\top A x_{i+1} = 0.
\end{equation}
Hence $x_{i+1}$ satisfies \eqref{eq:eigzeroshortobj2}. Combining the
solution of the single column case \eqref{eq:eigzeroobj2} and the
constraint \eqref{eq:eigzeroshortobj2}, we conclude that $X_{i+1}$ is of
the form $U P_{i+1} S_{i+1}$.

The stabilities of fixed points should also be analyzed through the
spectrum properties of their Jacobian matrices. The Jacobian matrix $\Diff
g_2(X)$, again, can be written as a $p$-by-$p$ block matrix. And using the
similar argument as in the proof of Theorem~\ref{thm:stationarypt-obj1},
$\Diff g_2(X) = \Diff G$ is a block upper triangular matrix whose
spectrum is determined by the spectrum of its diagonal blocks. Through a
multivariable calculus, we obtain the expression for $J_{ii}$ as,
\begin{equation} \label{eq:Jiiobj2}
    J_{ii} = 2A - A X_i X_i^\top - X_i X_i^\top A - A x_i x_i^\top -
    x_i^\top x_i A - x_i^\top A x_i I - x_i x_i^\top A.
\end{equation}

We first show the stability of the fixed points of form $X = U_p D$.
Substituting these points into \eqref{eq:Jiiobj2}, we have,
\begin{equation} 
    J_{ii} = A - 2 U_i \Lambda_i U_i^\top - 2 \lambda_i u_i u_i^\top
     - \lambda_i I.
\end{equation}
Since $\lambda_i$ is smaller than all eigenvalues of $A - U_i \Lambda_i
U_i^\top$, $A - U_i \Lambda_i U_i^\top - \lambda_i I$ is strictly positive
definite. The rest part of \eqref{eq:Jiiobj2} is, obviously, positive
definite. Hence $J_{ii}$ is strictly positive definite for all $i = 1, 2,
\dots, p$ and fixed points of the form $X = U_p D$ are stable fixed
points.

Next we show the rest fixed points are not stable. For a fixed point $X$,
we denote the first index $s$ such that $x_s^\top u_s = 0$. Then we
estimate $u_s^\top J_{ss} u_s$ as,
\begin{equation}
    u_s^\top J_{ii} u_s = 2\lambda_s - x_s^\top x_s \lambda_s -
    x_s^\top A x_s < 0,
\end{equation}
since $x_s^\top x_s \leq 1$ and $A$ is negative definite. Therefore, the
rest fixed points are not stable.

\end{proof}

\end{document}